\theoremstyle{definition}
\newtheorem{theorem}{Theorem}[]
\newtheorem{proposition}[theorem]{Proposition}
\newtheorem{corollary}[theorem]{Corollary}
\newtheorem{remark}[theorem]{Remark}
\newtheorem{lemma}[theorem]{Lemma}
\newtheorem{claim}[]{Claim}
\newtheorem{subclaim}{Subclaim}[claim]
\newtheorem{conjecture}[theorem]{Conjecture}
\numberwithin{equation}{section}
\title{New Invariants for Partitioning a Graph into 2-connected Subgraphs}
\author{
  Michitaka Furuya\thanks{College of Liberal Arts and Sciences, Kitasato University, 1-15-1 Kitasato, Minami-ku, 
  Sagamihara 252-0373, Japan. e-mail: michitaka.furuya@gmail.com.}, 
  Masaki Kashima\thanks{School of Fundamental Science and Technology,
  Graduate School of Science and Technology, Keio University, 3-14-1 Hiyoshi, Kohoku-ku, Yokohama 223-8522, Japan. email: masaki.kashima10@gmail.com}, 
  Katsuhiro Ota\thanks{Department of Mathematics, Keio University, 3-14-1 Hiyoshi, Kohoku-ku, Yokohama 223-8522, Japan. email: ohta@math.keio.ac.jp}
}
\begin{document}
\maketitle

\begin{abstract}
Let $G$ be a graph of order $n$. 
For an integer $k\geq 2$, a partition $\mathcal{P}$ of $V(G)$ is called a $k$-proper partition of $G$ if every $P\in\mathcal{P}$ induces a $k$-connected subgraph of $G$.
This concept was introduced by Ferrara et al.~\cite{Ferrara}, and 
Borozan et al. gave minimum degree conditions for the existence of a $k$-proper partition.
In particular, when $k=2$, 
they proved that if $\delta(G)\geq\sqrt{n}$, then $G$ has a 2-proper partition $\mathcal{P}$ with
$|\mathcal{P}|\leq \frac{n-1}{\delta(G)}$.
Later, Chen et al.~\cite{Chen} extended the result by giving a minimum degree sum condition
for the existence of a 2-proper partition.
In this paper, we introduce two new invariants of graphs $\sigma^*(G)$ and $\alpha^*(G)$,
which are defined from degree sum of particular independent sets.
Our result is that if $\sigma^*(G)\geq n$, then with some exceptions, 
$G$ has a 2-proper partition $\mathcal{P}$ with $|\mathcal{P}|\leq \alpha^*(G)$.
We completely determine exceptional graphs.
This result implies both of results by Borozan et al.~\cite{Borozan} and by Chen et al.~\cite{Chen}.
Moreover, we obtain a minimum degree product condition for the existence of a 2-proper partition
as a corollary of our result.\\
\textbf{Keywords} 2-proper partition, degree sum, degree product, block-cut-vertex graph
\end{abstract}

\section{Introduction}

Throughout the paper, we only consider simple and finite graphs.
For a graph $G$, $|G|$ denotes the order of $G$.
For a graph $G$ and a vertex $u\in V(G)$, $d_G(u)$ denotes the degree of $u$ 
and $N_G(u)$ denotes the neighborhood of $u$.
In addition, we use $N_G[u]=N_G(u)\cup\{u\}$ as the closed neighborhood of $u$.
For a positive integer $k$, $[k]$ denotes the set of positive integers at most $k$.
For a positive integer $n$, $K_n$ denotes the complete graph of order $n$, 
and $K_{n,n}$ denotes the complete bipartite graph with two parts of size $n$.
For the notation and terminology not defined in this paper, 
we refer the readers to Diestel~\cite{Diestel}.

\subsection{Degree, degree sum and degree product}\label{basicinv}

For a graph $G$, $\delta(G)$ denotes the minimum degree of $G$.
When the minimum degree of $G$ is large, $G$ has many edges and 
is expected to contain many structures (such as Hamiltonian cycle). 
Thus there are many results which give a lower bound of the minimum degree for graphs to have a particular structure.
One classical extended concepts of the minimum degree is the 
\textit{minimum degree sum}, which is defined by 
$\sigma_2(G)=\min\{d_G(u)+d_G(v)\mid u,v\in V(G), u\neq v, uv\notin E(G)\}$.
When a graph $G$ is complete, we define $\sigma_2(G)=+\infty$.
If a graph $G$ satisfies $\delta(G)\geq a$, then it follows easily that $\sigma_2(G)\geq 2a$, 
so the minimum degree sum is a natural extension of the minimum degree.
Recently, Furuya and Tsuchiya~\cite{Furuya} introduced another extension of the minimum degree, the \textit{minimum  degree product}.
The minimum degree product of a graph $G$ is defined by 
$\pi_2(G)=\min\{d_G(u)d_G(v)\mid u,v\in V(G), u\neq v, uv\notin E(G)\}$.
Again, we define $\pi_2(G)=+\infty$ when $G$ is complete.
By definitions, if a graph $G$ satisfies $\delta(G)\geq a$, then $G$ satisfies $\pi_2(G)\geq a^2$.
This invariant is a natural extension of the minimum degree as well, 
but to our knowledge, there is no other result with minimum degree product condition than one by Furuya and Tsuchiya~\cite{Furuya}.
One reason for that is the following relation between the minimum degree sum and the minimum degree product;
if $\pi_2(G)\geq a^2$, then $\sigma_2(G)\geq 2a$,
which can be easily shown by the inequality of geometric and arithmetric means.
For example, as an extension of Dirac's theorem~\cite{Dirac} on hamiltonicity,
we can show that if a graph $G$ of order $n$ satisfies $\pi_2(G)\geq \frac{n^2}{4}$, 
then $G$ is hamiltonian.
However, this is an easy corollary of Ore's theorem~\cite{Ore}.
Hence, minimum degree product conditions should be considered only when
there is a gap between the minimum degree sum condition and the double of the minimum degree condition for the problem.
In this paper, we show that the minimum degree product condition works well for a problem of vertex partition.
Also, we introduce a new invariant of graphs which is a further extension of the minimum degree product.

\subsection{Partitioning a graph into highly connected subgraphs}\label{knownrslt}

For a positive integer $k$ and a graph $G$, we say $G$ is \textit{$k$-connected} if $|G|>k$ and
$G-S$ is connected for any $S\subseteq V(G)$ with $|S|<k$.
Mader~\cite{Mader} proved that if $G$ satisfies $|E(G)|/|G|\geq 2k$, then $G$ has a $k$-connected subgraph.
Motivated from this result, Ferrara et al.~\cite{Ferrara} introduced the concept of $k$-proper partition.
(The name ``$k$-proper partition'' was introduced later by Borozan et al.~\cite{Borozan}.)
Let $G$ be a graph and $\mathcal{P}$ be a partition of $V(G)$.
For an integer $k$ at least 2, we say $\mathcal{P}$ is a \textit{$k$-proper partition} of $G$
if every $P\in\mathcal{P}$ induces a $k$-connected subgraph of $G$.
Ferrara et al.~\cite{Ferrara} showed the following result which gives a minimum degree condition for the existence of a $k$-proper partition.

\begin{theorem}[\cite{Ferrara}]\label{kpp1}
  Let $G$ be a graph of order $n$, and $k$ be an integer at least 2.
  If $\delta(G)\geq 2k\sqrt{n}$, then $G$ has a $k$-proper partition $\mathcal{P}$ with $|\mathcal{P}|\leq \frac{2kn}{\delta(G)}$.
\end{theorem}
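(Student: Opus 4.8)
The plan is to reduce the theorem to a statement about the sizes of the parts. It suffices to build a $k$-proper partition $\mathcal P$ of $G$ in which every member has order at least $\delta(G)/(2k)$: since the members partition the $n$ vertices of $G$, this forces $|\mathcal P|\cdot\delta(G)/(2k)\le n$, i.e.\ $|\mathcal P|\le 2kn/\delta(G)$. Write $d=\delta(G)$; note that $d\ge 2k\sqrt n$ is the same as $d/(2k)\le\sqrt n$ and that it forces $n>4k^{2}$. I will use only Mader's theorem in the quoted form (a graph $H$ with $|E(H)|\ge 2k|V(H)|$ has a $k$-connected subgraph), the standard observation that adjoining to a $k$-connected graph a new vertex with at least $k$ neighbours in it yields a $k$-connected graph, and one auxiliary \emph{size lemma}, discussed at the end: every graph of minimum degree at least $m$ contains a $k$-connected subgraph of order at least $m/2$.

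The construction is a greedy extraction followed by an absorption step. Put $R:=V(G)$, and while $G[R]$ contains a $k$-connected subgraph, pick one of maximum order, move its vertex set into $\mathcal P$, and delete it from $R$. By maximality each chosen subgraph is \emph{closed} in the current $G[R]$ — no vertex of $R$ outside it has $k$ neighbours in it, else that vertex could be adjoined — so its deletion lowers the degree of each surviving vertex of $R$ by less than $k$; hence after $j$ extractions the remaining graph has minimum degree at least $d-j(k-1)$. Suppose, for a contradiction, that some extraction produces a set of order less than $d/(2k)$, and take the first extraction that does so, say the $(j+1)$-st. Then the first $j$ extracted sets are disjoint and each has order at least $d/(2k)$, so $j\le 2kn/d\le\sqrt n$, and thus the current $G[R]$ has minimum degree at least $d-(k-1)\sqrt n\ge d-(k-1)\,d/(2k)=d(k+1)/(2k)$ (using $\sqrt n\le d/(2k)$); by the size lemma it then contains a $k$-connected subgraph of order at least $d(k+1)/(4k)>d/(2k)$, contradicting that this $(j+1)$-st extraction chose a maximum-order $k$-connected subgraph of order below $d/(2k)$. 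Hence every extracted set has order at least $d/(2k)$, so the extraction phase performs at most $\sqrt n$ extractions. When it halts, $G[R]$ contains no $k$-connected subgraph, so by Mader's theorem $|E(G[R])|<2k|R|$ and $G[R]$ is $(4k-1)$-degenerate; order $R$ as $u_1,\dots,u_r$ with each $u_i$ having fewer than $4k$ neighbours among $u_{i+1},\dots,u_r$, and absorb $u_1,u_2,\dots$ in turn. When $u_i$ is treated, at least $d-4k$ of its neighbours already lie in members of $\mathcal P$, which number at most $\sqrt n$; hence some member contains at least $k$ of them, and $u_i$ is added to that member, which stays $k$-connected. Absorption creates no new member, so $|\mathcal P|\le 2kn/d$, as wanted.

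What remains is the size lemma. For $k=2$ it is classical and much stronger than needed: a graph of minimum degree $m$ contains a cycle of length at least $m+1$, which is a $2$-connected subgraph, so in the case on which the rest of the paper concentrates the theorem falls out at once. For general $k$ I would prove it by a density-plus-peeling argument — pass to a subgraph whose minimum degree exceeds half the average degree, then repeatedly delete cutsets of fewer than $k$ vertices together with any low-degree vertices they create, checking that the retained subgraph stays large until it is $k$-connected — or else deduce it from a known Mader-type bound on the order of a $k$-connected subgraph forced by a minimum-degree condition. Pinning down this lemma with a usable constant (here $1/2$, and one only needs a fixed fraction exceeding $1/(k+1)$) is the step I expect to be the real obstacle; everything else is the kind of bookkeeping sketched above.
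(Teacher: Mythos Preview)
The paper does not prove this theorem: it is quoted from \cite{Ferrara} as background for the later discussion, and no argument for it appears anywhere in the text. So there is no in-paper proof to compare your attempt against; your proposal has to stand on its own.

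The greedy extract-then-absorb framework is sound, and the bookkeeping is essentially right. (One slip: $d\ge 2k\sqrt n$ gives $d/(2k)\ge\sqrt n$, not $\le$; you use the correct inequality later, so this is just a typo.) The closure argument bounding the degree loss per extraction by $k-1$, the count $j\le\sqrt n$, and the absorption via degeneracy ordering and pigeonhole all check out once $n>4k^{2}$, which the hypothesis forces.

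But there is a genuine gap, and it is exactly the one you name: the \emph{size lemma}, that a graph of minimum degree $m$ contains a $k$-connected subgraph on at least $m/2$ vertices, is asserted but not proved. For $k=2$ the longest-path argument gives a cycle of length $\ge m+1$, which is more than enough; for general $k$ your two suggestions --- ``density-plus-peeling'' and ``a known Mader-type bound'' --- are not proofs, and establishing any fixed fraction exceeding $1/(k+1)$ is nontrivial. This is essentially the substantive content of the Ferrara--Magnant--Wenger paper you are trying to reprove, so deferring it leaves the argument circular or incomplete. Until that lemma is pinned down with an explicit constant and proof, the proposal is a correct reduction to an unproved statement rather than a proof.
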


Later, this minimum degree condition was improved by Borozan et al.~\cite{Borozan}, who proved the following theorem.

\begin{theorem}[\cite{Borozan}]\label{kpp2}
  Let $G$ be a graph of order $n$, and $k$ be an integer at least 2.
  If $\delta(G)\geq \sqrt{c(k-1)n}$, then $G$ has a $k$-proper partition $\mathcal{P}$ with
  $|\mathcal{P}|\leq \frac{cn}{\delta(G)}$, where $c=\frac{2123}{180}$.
\end{theorem}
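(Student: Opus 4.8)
The plan is to build a $k$-proper partition greedily and to bound its size by a Mader-type density lemma, the precise constant $c=\frac{2123}{180}$ being exactly what the bookkeeping forces. The engine is the statement that some absolute multiple of $(k-1)$ in the average degree guarantees a $k$-connected subgraph — and, in the form one actually needs, a $k$-connected subgraph that is not too small; concretely, if a graph $H$ has average degree at least $c(k-1)$ then $H$ contains a $k$-connected subgraph, and the slack in $c$ is spent on making the extracted pieces large. Put $d=\delta(G)$; we may assume $G$ is not itself $k$-connected, since otherwise $\{V(G)\}$ works (and $1\le cn/d$).

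I would work with a family $\mathcal F=\{V_1,\dots,V_t\}$ of pairwise disjoint vertex sets, each inducing a $k$-connected subgraph, together with the residual $R=V(G)\setminus\bigcup_iV_i$, chosen so that $\sum_i|V_i|$ is maximum and, subject to that, $\sum_i|V_i|^2$ is maximum. There are two basic moves. \emph{Absorb}: if some $r\in R$ has at least $k$ neighbours in a single $V_i$, then $G[V_i\cup\{r\}]$ is still $k$-connected — adjoining to a $k$-connected graph a vertex of degree at least $k$ preserves $k$-connectivity — so $r$ may be moved into $V_i$. \emph{Extract}: if $G[R]$ contains a $k$-connected subgraph, its vertex set may be added to $\mathcal F$. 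Maximality of $\sum_i|V_i|$ forbids both, so every $r\in R$ has at most $k-1$ neighbours in each $V_i$ and $G[R]$ contains no $k$-connected subgraph; the density lemma then gives $2|E(G[R])|<c(k-1)|R|$, hence a vertex $r\in R$ with $d_{G[R]}(r)<c(k-1)$, which therefore has more than $d-c(k-1)$ neighbours inside $\bigcup_iV_i$, distributed at most $k-1$ to a part. Consequently, if $R\neq\emptyset$ then $t\geq(d-c(k-1))/(k-1)$, a large lower bound that I would later play off against an upper bound to force $R=\emptyset$.

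The heart of the proof is to show $t\leq cn/d$. Call a part \emph{small} if $|V_i|<d/c$. Each vertex of a small part sends more than $d-d/c$ edges out of its part, and each part has at least $k+1$ vertices, so if many parts are small there are very many edges between parts; contracting every part of $\mathcal F$ to a single vertex yields a (multi)graph $Q$ of large average degree, to which a density lemma applies and produces a $k$-connected — hence sufficiently edge-connected — subgraph $Q'$ of $Q$. Using that each $V_i$ is itself $k$-connected, a Menger/fan argument lifts $Q'$ to a $k$-connected subgraph of $G$ that is the union of the parts corresponding to $V(Q')$; replacing those parts by their union keeps $\sum_i|V_i|$ unchanged but strictly increases $\sum_i|V_i|^2$, contradicting the choice of $\mathcal F$. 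Hence the total size of the small parts is bounded, almost all of $\bigcup_iV_i$ lies in parts of size at least $d/c$, and combining $n\geq\sum_i|V_i|$ with this and with the bound $t\geq(d-c(k-1))/(k-1)$ from the previous paragraph simultaneously forces $R=\emptyset$ and yields $t\leq cn/d$. Then $\mathcal F$ is the desired $k$-proper partition.

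The main obstacle is the quantitative matching in the last step, and this is precisely where $c=\frac{2123}{180}$ gets used: one must balance the threshold $c(k-1)$ in the density lemma, the loss of $k-1$ edges per part whenever a residual vertex fails to concentrate $k$ neighbours in one part, and the size cutoff $d/c$ separating small from large parts, so that all the inequalities close up exactly at $d^2=c(k-1)n$. Two further technical points deserve care: the lifting lemma for $Q$ — that a $k$-connected, or merely suitably edge-connected, subgraph of the contracted multigraph lifts to a $k$-connected union of parts, given that each part is $k$-connected, which is a Menger argument routing $k$ internally disjoint paths through the parts — and the handling of the few large parts in the final count, whose sheer size already makes $n\geq t\cdot d/c$ essentially hold.
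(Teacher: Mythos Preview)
The paper does not contain a proof of this theorem: Theorem~\ref{kpp2} is quoted from Borozan et al.\ as background, and the paper only remarks that ``their proof of Theorem~\ref{kpp2} uses a known result of the edge density for a graph to have a $k$-connected subgraph.'' So there is no proof here to compare against beyond that one-line hint. Your sketch is broadly consistent with the hint --- you do invoke a Mader-type density lemma as the engine --- but I cannot confirm that your extremal scheme (maximising $\sum|V_i|$ and then $\sum|V_i|^2$, contracting parts, lifting) matches what Borozan et al.\ actually do, since that argument is not reproduced in the present paper.

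That said, your sketch has genuine gaps that would need real work to close. The most serious is the lifting step: it is \emph{not} true in general that if each $V_i$ is $k$-connected and the contracted multigraph has a $k$-connected (or $k$-edge-connected) subgraph on some subset of the $V_i$, then the union of those $V_i$ is $k$-connected in $G$. For $k=2$, take two triangles joined by two edges both incident with the same vertex $a$ of the first triangle; the contracted multigraph is a pair of vertices joined by a double edge, hence $2$-edge-connected, but $a$ is a cut-vertex of the union. Any lifting lemma of this kind needs an additional hypothesis controlling how the inter-part edges attach inside each part, and your ``Menger argument routing $k$ internally disjoint paths through the parts'' does not supply it. Second, you explicitly defer the quantitative step that pins down $c=\tfrac{2123}{180}$; but that is the entire content of the theorem over Theorem~\ref{kpp1}, so ``the bookkeeping forces it'' is not a proof. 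Finally, your claim that $R=\emptyset$ is forced by playing a lower bound $t\ge(d-c(k-1))/(k-1)$ against an upper bound $t\le cn/d$ goes the wrong way: these two inequalities are compatible, not contradictory, precisely when $d\le\sqrt{c(k-1)n}$ (roughly), so they do not by themselves empty the residual.
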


Their proof of Theorem \ref{kpp2} uses a known result of the edge density for a graph to have a $k$-connected subgraph.
They conjectured that the constant $c$ in Theorem \ref{kpp2} can be reduced to 1.

\begin{conjecture}[\cite{Borozan}]\label{kppconj}
  Let $G$ be a graph of order $n$, and $k$ be an integer at least 2.
  If $\delta(G)\geq\sqrt{(k-1)n}$, then $G$ has a $k$-proper partition $\mathcal{P}$
  with $|\mathcal{P}|\leq\frac{n-k+1}{\delta(G)-k+2}$.
\end{conjecture}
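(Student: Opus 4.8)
The plan is to prove Conjecture~\ref{kppconj} for general $k$ by an extremal argument on partial $k$-proper partitions, modified only by two local moves. The first is a \emph{merge lemma}: if $H_1,H_2$ are vertex-disjoint induced $k$-connected subgraphs of $G$ joined by $k$ pairwise vertex-disjoint edges, then $G[V(H_1)\cup V(H_2)]$ is $k$-connected. (Deleting fewer than $k$ vertices leaves each $H_i$ connected and destroys at most $k-1$ of the $k$ independent edges, so one surviving edge keeps the union connected, and the union has more than $k$ vertices.) The second is an \emph{absorption lemma}: if $H$ is $k$-connected and $w\notin V(H)$ has at least $k$ neighbors in $V(H)$, then $G[V(H)\cup\{w\}]$ is $k$-connected, by the same deletion count. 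I would take these two operations as the only ways to modify a partition, since $k$-connectivity is otherwise fragile and, for $k\ge 3$, there is no block-cut-vertex decomposition to lean on as in the $k=2$ theorem of this paper.

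Next I would consider all collections $\mathcal{P}$ of pairwise vertex-disjoint parts, each inducing a $k$-connected subgraph, and fix one that first maximizes the number of covered vertices $|U|$, where $U=\bigcup_{P\in\mathcal P}P$, and subject to that minimizes $m=|\mathcal P|$. Writing $W=V(G)\setminus U$, the first objective is to force $W=\emptyset$, which also supplies the \emph{existence} of a $k$-proper partition under the hypothesis $\delta(G)\ge\sqrt{(k-1)n}$; note this is not delivered by Theorem~\ref{kpp2}, whose constant $c$ exceeds $1$. By the absorption lemma no $w\in W$ can have $k$ neighbors in a single part, so each such $w$ has at most $(k-1)m$ neighbors in $U$ and hence $d_{G[W]}(w)\ge \delta-(k-1)m$. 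If $W$ is large this makes $G[W]$ dense, and the Mader edge-density bound quoted in Section~\ref{knownrslt} yields a $k$-connected subgraph inside $W$ that may be added, contradicting maximality of $|U|$; if instead $W$ is small and nonempty, the inequality $\delta\le (k-1)m+(|W|-1)$ forces $m$ to be large, which will contradict the count proved below. The degree condition $\delta^2\ge(k-1)n$ is precisely the balance point that closes both horns.

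With $W=\emptyset$ and $m$ minimized, I would show every part satisfies $|P|\ge \delta-k+2$, so that summing gives $m(\delta-k+2)\le n$, and then obtain the sharper $m(\delta-k+2)\le n-(k-1)$ demanded by the conjecture. Suppose some part $P$ has $|P|\le \delta-k+1$; then each vertex of $P$ has at least $\delta-(|P|-1)\ge k$ external neighbors, so $P$ sends at least $k|P|$ edges to the other parts. If some single part $Q$ is joined to $P$ by $k$ independent edges, the merge lemma replaces $P,Q$ by $P\cup Q$ and lowers $m$, a contradiction; hence for every $Q$ the bipartite graph between $P$ and $Q$ has matching number at most $k-1$, and so by K\"onig's theorem a vertex cover of size at most $k-1$. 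Bounding the total external edge count of $P$ through these covers and comparing with the lower bound $k|P|$ yields an inequality on $m$, hence on $n$, that contradicts $\delta\ge\sqrt{(k-1)n}$. The improvement from $n$ to $n-(k-1)$ then follows from a short argument showing the parts cannot all attain the minimum size $\delta-k+2$ simultaneously, producing a surplus of at least $k-1$.

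I expect the counting in this last step to be the main obstacle. The delicate point is turning ``$P$ is unmergeable with every single part'' into a usable global bound: the K\"onig covers may sit on either side of each bipartite graph and may reuse vertices of $P$ across different target parts, so controlling $\sum_Q e(P,Q)$ needs a careful double count that must remain tight against the threshold $\sqrt{(k-1)n}$, which the conjecture asserts is best possible. A second difficulty, specific to $k\ge 3$, is that the merge and absorption moves must by themselves suffice to reach a minimal configuration with all parts large, rather than stalling at a partition carrying several mutually unmergeable small parts; breaking such a stalemate may require enlarging the repertoire of local moves or tracking a finer extremal potential, for instance a $k$-analogue of the invariants $\sigma^*$ and $\alpha^*$ introduced here, and it is here that I anticipate the bulk of the work.
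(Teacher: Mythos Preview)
The statement you are attempting to prove is Conjecture~\ref{kppconj}, which the paper presents explicitly as an \emph{open conjecture} of Borozan et al.\ and does not prove; only the case $k=2$ is established (Theorem~\ref{delta}), and the paper's own contributions concern $2$-proper partitions via the invariants $\sigma^*$ and $\alpha^*$. There is therefore no proof in the paper to compare your proposal against: you are sketching an attack on a problem the paper leaves open.

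As to the sketch itself, the two obstacles you flag are genuine and, as written, fatal. First, the argument that $W=\emptyset$ does not close at the threshold $\delta=\sqrt{(k-1)n}$. Once Mader ceases to apply to $G[W]$ you get only the lower bound $(k-1)m>\delta-O(k)$, while the sole a priori upper bound on $m$ is $|U|/(k+1)$; these are compatible with $|W|$ of order $n$. Invoking ``the count proved below'' is circular, since that count (each part has size at least $\delta-k+2$) is argued via the merge lemma after $W=\emptyset$ is already in hand; when $W\neq\emptyset$, a small part $P$ may send most of its external edges into $W$, and no merge with another part is forced. Second, even granting $W=\emptyset$, the K\"onig step does not yield the needed inequality. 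A vertex cover of size at most $k-1$ in the $P$--$Q$ bipartite graph gives only $e(P,Q)\le(k-1)\max\{|P|,|Q|\}$; summing over $Q$ bounds the external edge count of $P$ by something of order $(k-1)n$, whereas your lower bound is $k|P|\le k\delta$, so no contradiction with $\delta^2\ge(k-1)n$ results. Tightening this would require controlling how the covers distribute between the $P$-side and the $Q$-side across all $Q$ simultaneously, and nothing in the setup does that. These are not cosmetic gaps: closing either one would constitute real progress on an open problem, and your proposal does not yet supply the missing ideas.
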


If Conjecture \ref{kppconj} has a positive answer, then
both of the lower bound of the minimum degree and the upper bound of $|\mathcal{P}|$ are sharp.
In addition, they verified Conjecture \ref{kppconj} for the case $k=2$.

\begin{theorem}[\cite{Borozan}]\label{delta}
  Let $G$ be a graph of order $n$.
  If $\delta(G)\geq \sqrt{n}$, then $G$ has a 2-proper partition $\mathcal{P}$ 
  with $|\mathcal{P}|\leq \frac{n-1}{\delta(G)}$.
\end{theorem}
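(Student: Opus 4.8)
The plan is to induct on $n$. If $G$ is disconnected, each component $C$ satisfies $\delta(C)\ge\delta(G)\ge\sqrt n\ge\sqrt{|C|}$, so the induction hypothesis yields a $2$-proper partition $\mathcal{P}_C$ of $C$ with $|\mathcal{P}_C|\le(|C|-1)/\delta(C)\le(|C|-1)/\delta(G)$; taking the union over the (at least two) components gives $|\mathcal{P}|\le(n-2)/\delta(G)<(n-1)/\delta(G)$. If $G$ is $2$-connected, then $\mathcal{P}=\{V(G)\}$ works, since $\delta(G)\le n-1$. Observe that $\delta(G)\ge\sqrt n$ forces $n\ge 3$, $\delta(G)\ge 2$, and every component of $G$ to have at least three vertices; in particular every graph meeting the hypothesis with $n\le 4$ is $2$-connected, so the base case is settled. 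Hence we may assume $G$ is connected and has a cut vertex.

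I would next exploit the block--cut-vertex structure of $G$. Let $B$ be an end-block of $G$, that is, a block meeting exactly one cut vertex $c$ of $G$. Since each $v\in V(B)\setminus\{c\}$ lies in no block other than $B$, all of its neighbours lie in $B$, so $\delta(G)\le d_G(v)=d_B(v)\le|B|-1$; therefore $|B|\ge\delta(G)+1\ge\sqrt n+1$. Thus the minimum-degree hypothesis forces every end-block of $G$ to be genuinely $2$-connected and large, large enough to serve as a single part of the desired partition.

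The heart of the argument is then to peel off such an end-block and recurse. Put $G'=G-(V(B)\setminus\{c\})$; this graph is connected, has $|G'|=n-|B|+1\le n-\delta(G)<n$ vertices, and every vertex of $G'$ other than $c$ keeps the degree it had in $G$, because no vertex outside $B$ is adjacent to $V(B)\setminus\{c\}$. One would like to apply the induction hypothesis to $G'$ and adjoin $V(B)$ as one more part; when $d_{G'}(c)\ge\sqrt{|G'|}$ this goes through, and the extra part $V(B)$, of size at least $\delta(G)+1$, is exactly what upgrades the crude estimate $n/\delta(G)$ to $(n-1)/\delta(G)$. \textbf{The main obstacle is the cut vertex $c$:} removing $V(B)\setminus\{c\}$ deletes all of $c$'s neighbours inside $B$, so a priori $d_{G'}(c)$, and hence $\delta(G')$, can fall well below $\sqrt{|G'|}$, and the induction hypothesis need not apply to $G'$ as stated. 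The natural remedies, either of which I would try to push through, are: (i) strengthen the inductive statement to tolerate one deficient vertex, exploiting the fact that a small $d_{G'}(c)$ means $c$ had many neighbours inside $B$, which makes $|B|$ (hence the adjoined part) correspondingly larger and so pays for the deficit; or (ii) avoid creating the deficit, by not peeling off all of $V(B)\setminus\{c\}$ — instead leaving a sub-collection of $B$'s vertices attached to $c$ so that $c$'s degree is restored while $V(B)$ still decomposes into $2$-connected parts — or simply re-selecting the end-block. Making this bookkeeping close up exactly, and in particular keeping the final bound expressed in terms of $\delta(G)$ rather than the possibly smaller $\delta(G')$, is the delicate part of the proof.

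For the count I would carry through the induction the invariant that the partition produced has every part of size at least $\delta(G)$ and, unless it is the trivial partition, at least one part of size strictly larger than $\delta(G)$. Then $n=\sum_{P}|P|\ge\delta(G)\,|\mathcal{P}|+1$ when $|\mathcal{P}|\ge 2$ and $n\ge\delta(G)+1$ when $|\mathcal{P}|=1$, so $\delta(G)\,|\mathcal{P}|\le n-1$ in either case, which is the assertion. In the end-block step the adjoined part $V(B)$, of size $\ge\delta(G)+1$, supplies the strictly larger part by itself, while every other part comes from the recursive partition of $G'$; securing that those parts still have size $\ge\delta(G)$ (rather than only $\ge\delta(G')$) is the same knot that the treatment of the cut vertex above must untie.
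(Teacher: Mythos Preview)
This theorem is not proved in the present paper; it is quoted from Borozan et al.\ and only described as ``a method of constructing a 2-proper partition from the block decomposition of $G$.'' The paper's own contribution, Theorem~\ref{ind}, generalises it, and its proof likewise works \emph{globally} on the block--cut-vertex tree rather than by induction on $n$: one roots the tree at an end-block, shows (Claims~\ref{xblock} and~\ref{asize}) that in each block $B$ the non-cut-vertices $X_B$ lie inside a single 2-connected piece of $B-u_B$, then assembles the partition in one pass (Claim~\ref{2pp}) by assigning each cut vertex to a suitable neighbouring block, and finally counts parts by exhibiting a light independent set with one vertex per block having $X_B\neq\emptyset$ (Claim~\ref{fbsize}). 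Specialised to the $\delta(G)\ge\sqrt n$ setting, this count becomes: every such block has $|B|\ge\delta(G)+1$, and $\sum_B(|B|-1)=n-1$, so there are at most $(n-1)/\delta(G)$ of them.

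Your proposal correctly isolates the obstacle but does not resolve it, and the obstacle is genuine. When you peel off an end-block $B$ and pass to $G'=G-(V(B)\setminus\{c\})$, the degree of $c$ can collapse to as little as $1$, so the hypothesis $\delta(G')\ge\sqrt{|G'|}$ fails and the induction does not apply. Remedy~(i), allowing one deficient vertex, is not a minor patch: at the next step you peel off another end-block of $G'$, creating a \emph{second} deficient cut vertex while $c$ may still be deficient, so the strengthened hypothesis would have to tolerate an unbounded set of deficient vertices accumulating along the spine of the block tree. Remedy~(ii) is left entirely unspecified. The final paragraph's size invariant (``every part has $\ge\delta(G)$ vertices'') is precisely what the direct block-decomposition argument guarantees automatically, because each part contains the $X_B$ of some block and hence a vertex whose full neighbourhood lies in that part; but your inductive framework cannot get there without first handling the degree drop at $c$. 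The global construction sidesteps this entirely, which is why Borozan et al.\ and this paper both take that route.
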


Their proof of Theorem \ref{delta} gives a method of constructing a 2-proper partition from the block decomposition of $G$.
This idea is used in other results (including ours) to construct a 2-proper partition.
Chen et al.~\cite{Chen} extended Theorem \ref{delta} to the result with a minimum degree sum condition as follows.

\begin{theorem}[\cite{Chen}]\label{sigma}
  Let $G$ be a non-complete graph of order $n$ with minimum degree $\delta$.
  If either $\delta\geq \sqrt{n}$, or $1\leq\delta\leq\sqrt{n}-1$ and $\sigma_2(G)\geq\frac{n}{\delta}+\delta-1$, 
  then $G$ has a 2-proper partition $\mathcal{P}$ with
  $|\mathcal{P}|\leq \frac{2(n-1)}{\sigma_2(G)}$.
\end{theorem}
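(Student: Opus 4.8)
The plan is to prove Theorem~\ref{sigma} by induction on the order $n$, constructing the partition from the block--cut-vertex tree of $G$ in the spirit of the proof of Theorem~\ref{delta}. First note that the hypothesis forces $\delta(G)\ge 2$: if $v$ had degree $1$ with neighbour $u$, then for any $w\notin\{u,v\}$ we would have $\sigma_2(G)\le d_G(v)+d_G(w)\le 1+(n-2)<n\le\frac{n}{\delta}+\delta-1$, a contradiction; so $G$ has no pendant vertex, and every end-block of $G$ has at least three vertices and is $2$-connected. I would then dispose of the degenerate configurations. If $G$ is $2$-connected, take $\mathcal P=\{V(G)\}$; two non-adjacent vertices of $G$ each have degree at most $n-2$, so $\sigma_2(G)\le 2(n-2)<2(n-1)$ and $|\mathcal P|=1\le\frac{2(n-1)}{\sigma_2(G)}$. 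If $G$ is disconnected, apply the induction hypothesis to each component $C$: since $\delta(C)\ge\delta(G)$, $\sigma_2(C)\ge\sigma_2(G)$, $|C|\le n$ and the map $t\mapsto\frac{N}{t}+t$ (for fixed $N>0$) is decreasing on $(0,\sqrt{N}]$, the component $C$ still satisfies the hypothesis (complete components of order at least $3$ being taken as single parts and handled by a direct count), and summing the bounds over the components gains the ``$-1$''.

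So assume $G$ is connected, non-complete, and not $2$-connected, and let $B$ be an end-block of $G$ with cut vertex $c$. The structural fact driving everything is that every $v\in V(B)\setminus\{c\}$ satisfies $N_G(v)\subseteq V(B)$; hence $|B|\ge d_G(v)+1\ge\delta+1$, and if $V(B)\setminus\{c\}$ contains two non-adjacent vertices $x,y$ then $d_G(x),d_G(y)\le|B|-1$, so $|B|-1\ge\tfrac12\bigl(d_G(x)+d_G(y)\bigr)\ge\tfrac12\sigma_2(G)$. The clean peeling step is this: if $G[V(B)\setminus\{c\}]$ is $2$-connected, take $U=V(B)\setminus\{c\}$ as one part of $\mathcal P$ and recurse on $G'=G-U$. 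Nothing outside $B$ is adjacent to $V(B)\setminus\{c\}$, so $G'$ is connected and no vertex of $G'$ except $c$ loses a neighbour; provided $d_{G'}(c)$ stays large enough, $G'$ again satisfies the hypothesis (one may have to pass between the two regimes $\delta\ge\sqrt{n}$ and $\delta\le\sqrt{n}-1,\ \sigma_2(G)\ge\frac{n}{\delta}+\delta-1$), and then the needed inequality $1+\frac{2(|G'|-1)}{\sigma_2(G')}\le\frac{2(n-1)}{\sigma_2(G)}$ follows from a short computation using $|U|=|B|-1\ge\tfrac12\sigma_2(G)$.

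The real work, and the step I expect to be the main obstacle, is the cases in which this clean step fails, of which there are three. First, $G[V(B)\setminus\{c\}]$ need not be $2$-connected; the extreme instance is an end-block that is a cycle, where every vertex has degree $2$ and the cut vertex is \emph{forced} into the part, so that ``$2$-connected'' and ``size $\ge\tfrac12\sigma_2(G)$'' are in genuine conflict. I would handle this by taking $U=V(B)$ itself (legitimate since $B$ is $2$-connected, when $c$ can be spared from the rest of $G$) or by descending into the block--cut-vertex tree of $G[V(B)\setminus\{c\}]$ to a genuinely $2$-connected pendant piece, and I would argue that the configurations in which neither resolution works contain so many low-degree vertices that, against $\sigma_2(G)\ge\frac{n}{\delta}+\delta-1$ and the regime constraint on $\delta$, only finitely many orders $n$ remain, to be checked by hand. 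Second, $B$ may be a clique $K_{\delta+1}$ hanging off $c$, so the best available part has only $\delta+1$ vertices, below $\tfrac12\sigma_2(G)$; but then every $v\in V(B)\setminus\{c\}$ has degree exactly $\delta$ and is non-adjacent to all of $V(G)\setminus V(B)$, so $\sigma_2(G)\le\delta+d_G(w)$ for \emph{every} $w\notin V(B)$, which makes the minimum degree of the remainder large; one then shows the remainder still satisfies the hypothesis --- often now in the first, minimum-degree regime, where Theorem~\ref{delta} is directly available --- and closes the case by a count. Third, removing $U$ may drop $d(c)$ below what the hypothesis needs, or split $G'$ into several pieces; whenever this threatens I would instead absorb $c$ into $U$, or choose a different end-block, and re-verify that the parameters stay in range.

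The bookkeeping common to all branches --- propagating the two-regime hypothesis to the remainder after each deletion, and carrying the ``$-1$'' through the induction so the conclusion is $\frac{2(n-1)}{\sigma_2(G)}$ and not $\frac{2n}{\sigma_2(G)}$ --- is routine once the structural dichotomy is in place. What the whole proof really turns on is that dichotomy: that every $G$ outside a short list of small exceptions admits either a pendant $2$-connected part on at least $\tfrac12\sigma_2(G)$ vertices, or a vertex-deleted remainder with minimum degree large enough to invoke Theorem~\ref{delta}.
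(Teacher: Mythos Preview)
The paper does not prove Theorem~\ref{sigma} directly; it is quoted from \cite{Chen}. What the paper does is prove the stronger Theorem~\ref{ind} (with the new invariants $\sigma^*$ and $\alpha^*$) and then derive Corollary~\ref{sigmarevise}, which implies Theorem~\ref{sigma}, via Propositions~\ref{thmrelation1} and~\ref{thmrelation2}. So the route in the paper is: replace $\sigma_2$ and the bound $\tfrac{2(n-1)}{\sigma_2}$ by $\sigma^*$ and $\alpha^*$, prove the theorem for those, and translate back. The point of this detour is precisely to avoid the bookkeeping you describe: the condition $\sigma^*(G)\ge |G|$ passes cleanly to $G-V(B)$ (this is Claim~\ref{exception}), and $\alpha^*$ is superadditive over components (Lemma~\ref{alphacomp}) and counted by a single light independent set at the end (Claim~\ref{fbsize}), so no recursion inequality of the form $1+\tfrac{2(|G'|-1)}{\sigma_2(G')}\le\tfrac{2(n-1)}{\sigma_2(G)}$ ever has to be checked.

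Your direct inductive peeling is a reasonable plan, but the sketch has a genuine gap exactly where you flag the ``real work''. You assert that when $G[V(B)\setminus\{c\}]$ fails to be $2$-connected, either absorbing $c$ or descending into a pendant $2$-connected piece of $B-c$ works, and that the residual bad configurations are confined to finitely many orders. This is not so in general: the paper's Theorem~\ref{ind} carries an \emph{infinite} family of exceptions $\mathcal{H}_n$ (two cliques hanging off a single cut pair, plus a pendant triangle), and what removes them from Theorem~\ref{sigma} is not the $\sigma_2$ bound but the regime constraint $\delta\le\sqrt{n}-1$ or $\delta\ge\sqrt{n}$ (all the exceptions have $\sqrt{n}-1<\delta<\sqrt{n}$). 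Your argument never uses that constraint in a sharp way, so the dichotomy you are hoping for---``large $2$-connected pendant piece, or remainder with $\delta\ge\sqrt{n'}$''---cannot hold as stated. Relatedly, after peeling $U=V(B)\setminus\{c\}$ you need $\sigma_2(G')\ge\sigma_2(G)$ for the recursion inequality, but $d_{G'}(c)$ can drop arbitrarily; absorbing $c$ into $U$ then disconnects $G'$ into pieces whose individual hypotheses you have not verified. These are the same obstacles that the paper handles by switching to $\sigma^*,\alpha^*$ and working globally on the whole block--cut-vertex tree (Claims~\ref{xblock}, \ref{2pp}, \ref{fbsize}) rather than peeling one block at a time.
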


Chen et al.~\cite{Chen} did not assume the condition that $G$ is non-complete.
However, if $G$ is complete, then $\sigma_2(G)=+\infty$, and so the upper bound of $|\mathcal{P}|$ is incorrect.
Thus, to give a correct statement, we have added the condition.
Note that Theorem \ref{sigma} says nothing about a graph $G$ with $\sqrt{|G|}-1<\delta(G)<\sqrt{|G|}$.
In fact, there are infinitely many graphs $G$ with the inequality
(see the paragraph preceeding Corollary \ref{sigmarevise} in Subsection \ref{mainrslt}).
In addition, the assumption $\sigma_2(G)\geq \frac{n}{\delta}+\delta-1$ is equivalent to the inequality
$\delta(\sigma_2(G)-\delta)\geq n-\delta$, 
so it looks like an assumption on a kind of degree product.
This observation and the fact that $\sqrt{n}$ appears in the minimum degree condition in Theorem \ref{delta}
lead us to the idea of considering minimum degree product conditions for the problem of 2-proper partition.

\subsection{The minimum degree sum on large independent sets and the light independence number}\label{newinv}
To state our main result, we first introduce a new invariant $\sigma^*(G)$ as follows.
For a graph $G$, let $\mathcal{I}(G)$ denote the family of independent sets of $G$.
For each $I\in\mathcal{I}(G)$, let $\delta_G(I)=\min\{d_G(u)\mid u\in I\}$ and $w_G(I)=\sum_{u\in I}d_G(u)$.
We say $I\in\mathcal{I}(G)$ is \textit{large} if $|I|\geq\delta_G(I)+1$.
The \textit{minimum degree sum on large independent sets} is defined by
$\sigma^*(G)=\min\{w_G(I)\mid I\in\mathcal{I}(G), I \textrm{\:is\:large}\}$.
If there is no large independent set of $G$, then we define $\sigma^*(G)=+\infty$.
Our main result, which we state in the next subsection, deals with the graphs with $\sigma^*(G)\geq |G|$.
The next proposition describes the relations of the parameters $\sigma_2(G)$, $\pi_2(G)$ and $\sigma^*(G)$.

\begin{proposition}\label{thmrelation1}
  Let $G$ be a graph of order $n$ with minimum degree $\delta$.
  If $\delta\geq 1$ and $\sigma_2(G)\geq \frac{n}{\delta}+\delta-1$, then $\pi_2(G)\geq n-\delta$.
  Furthermore, if $\pi_2(G)\geq n-\delta$, then $\sigma^*(G)\geq n$.
\end{proposition}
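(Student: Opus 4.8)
The plan is to prove the two implications separately, both by contradiction, exploiting the fact that the extremal/witnessing pairs and sets are characterized by low-degree vertices. For the first implication, suppose $\delta \geq 1$, $\sigma_2(G) \geq \frac{n}{\delta} + \delta - 1$, but $\pi_2(G) < n - \delta$. Then there exist distinct nonadjacent vertices $u, v$ with $d_G(u) d_G(v) \leq n - \delta - 1$. Write $a = d_G(u)$ and $b = d_G(v)$; without loss of generality $a \leq b$, and note $a \geq \delta \geq 1$. From $ab \leq n - \delta - 1$ we want to derive $a + b < \frac{n}{\delta} + \delta - 1$, contradicting $\sigma_2(G) \geq a+b$. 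The key observation is that for fixed product $ab \leq n-\delta-1$, the sum $a+b$ is maximized when one factor is as small as possible, i.e. $a = \delta$; then $b \leq \frac{n-\delta-1}{\delta}$, so $a + b \leq \delta + \frac{n-\delta-1}{\delta} = \frac{n}{\delta} + \delta - 1 - \frac{1}{\delta} - \frac{\delta}{\delta} \cdot$ — here one must be a little careful with the arithmetic, but the point is that $\delta + \frac{n-\delta-1}{\delta} < \frac{n}{\delta} + \delta - 1$ reduces to $\frac{n - \delta - 1}{\delta} < \frac{n}{\delta} - 1 = \frac{n-\delta}{\delta}$, which is immediate since $\delta \geq 1$. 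Making rigorous the reduction ``$a+b$ is maximized at $a = \delta$'' requires noting that the function $g(a) = a + \frac{n-\delta-1}{a}$ is decreasing on $(0, \sqrt{n-\delta-1}]$ and that we may assume $a \leq \sqrt{n-\delta-1}$ since $a \leq b$; this is the one spot needing a short monotonicity argument.

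For the second implication, suppose $\pi_2(G) \geq n - \delta$ but $\sigma^*(G) < n$. Then there is a large independent set $I$ with $w_G(I) = \sum_{u \in I} d_G(u) < n$. Let $d = \delta_G(I) = \min\{d_G(u) : u \in I\}$; by largeness, $|I| \geq d + 1$. Pick $u^* \in I$ with $d_G(u^*) = d$. Every other vertex $v \in I$ is nonadjacent to $u^*$, so $d \cdot d_G(v) = d_G(u^*) d_G(v) \geq \pi_2(G) \geq n - \delta$, hence $d_G(v) \geq \frac{n-\delta}{d}$ for all $v \in I \setminus \{u^*\}$ — and also $d_G(u^*) = d \geq \delta$. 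Then
\[
n > w_G(I) = d_G(u^*) + \sum_{v \in I \setminus \{u^*\}} d_G(v) \geq d + (|I| - 1)\cdot \frac{n - \delta}{d} \geq d + d \cdot \frac{n-\delta}{d} = d + n - \delta \geq n,
\]
using $|I| - 1 \geq d$ and $d \geq \delta$. This is a contradiction. (One should double-check the edge case $\pi_2(G) = +\infty$, i.e. $G$ complete: then $G$ has no large independent set unless $|I| = 1$, in which case largeness forces $1 \geq d_G(u^*) + 1$, impossible for $\delta \geq 1$, so $\sigma^*(G) = +\infty \geq n$ trivially; and if $G$ has no edges at all one handles $n$ small directly, though the hypothesis $\delta \geq 1$ or the structure of the statement makes this vacuous.)

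The argument for the second implication is the heart of the proposition, but it is short once the right quantities are named; the only genuinely delicate point is the bookkeeping around which vertex of $I$ realizes $\delta_G(I)$ and ensuring the bound $d_G(v) \geq \frac{n-\delta}{d}$ is applied to exactly the $|I| - 1$ vertices other than $u^*$, combined with the largeness inequality $|I| \geq d + 1$ in the form $|I| - 1 \geq d$. I do not expect any serious obstacle; the whole proposition is essentially two applications of ``product bound forces each factor large, then sum up'' together with the elementary inequality $\frac{n-\delta-1}{\delta} < \frac{n-\delta}{\delta}$.
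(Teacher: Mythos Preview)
Your proposal is correct and follows essentially the same approach as the paper. For the second implication your argument is virtually identical to the paper's (pick $u^*\in I$ realizing $\delta_G(I)$, bound each other $d_G(v)$ below by $(n-\delta)/d$ via $\pi_2$, and sum using $|I|-1\geq d$); for the first implication the paper argues directly that $\pi_2(G)\geq \delta(\sigma_2(G)-\delta)\geq n-\delta$ via the monotonicity of $x\mapsto x(\sigma_2-x)$ on $[0,\sigma_2/2]$, which is exactly the contrapositive of your bound $a+b\leq a+\tfrac{n-\delta-1}{a}\leq \delta+\tfrac{n-\delta-1}{\delta}$. One small remark: your parenthetical about the case $d=0$ conflates ``$u^*$ isolated'' with ``$G$ edgeless''; the clean fix (which the paper also uses) is that $d_G(u^*)=0$ forces $\delta=0$, and then any $v\neq u^*$ witnesses $\pi_2(G)=0<n=n-\delta$, contradicting the hypothesis when $n\geq 2$.
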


\begin{proof}
  We first show the former statement.
  Suppose that $\delta\geq 1$ and $\sigma_2(G)\geq \frac{n}{\delta}+\delta-1$.
  We may assume that $\pi_2(G)<+\infty$.
  Let $u$ and $v$ be non-adjacent vertices of $G$ with $d_G(u)d_G(v)=\pi_2(G)$.
  We assume $d_G(u)\leq d_G(v)$ without loss of generality.
  If $d_G(u)\leq\frac{\sigma_2(G)}{2}$, then we have $\delta\leq d_G(u)\leq\frac{\sigma_2(G)}{2}$ and 
  \begin{align*}
    d_G(u)d_G(v)\geq d_G(u)(\sigma_2(G)-d_G(u))\geq\delta(\sigma_2(G)-\delta).
  \end{align*}
  Otherwise, 
  \begin{align*}
    d_G(u)d_G(v)>\left(\frac{\sigma_2(G)}{2}\right)^2\geq \delta(\sigma_2(G)-\delta).
  \end{align*}
  In either case, we have 
  $\pi_2(G)=d_G(u)d_G(v)\geq\delta(\sigma_2(G)-\delta)\geq\delta\left(\frac{n}{\delta}-1\right)=n-\delta$.

  Next, we show the latter statement.
  Suppose that $\pi_2(G)\geq n-\delta$.
  We may assume that $\sigma^*(G)<+\infty$.
  Let $I$ be a large independent set of $G$ with $w_G(I)=\sigma^*(G)$, 
  and let $u\in I$ be a vertex which attains $\delta_G(I)=d_G(u)$.
  For the moment, suppose that $d_G(u)=0$.
  Since $\sigma^*(G)\geq n>d_G(u)$, there exists a vertex $v\in I\setminus\{u\}$.
  Then $0=d_G(u)d_G(v)\geq\pi_2(G)\geq n-\delta$, and so $\delta\geq n$, 
  which is a contradiction.
  Thus $d_G(u)\geq 1$.
  As $d_G(u)d_G(v)\geq \pi_2(G)\geq n-\delta$ for each $v\in I\setminus\{u\}$, we have
  \begin{align*}
    \sigma^*(G)=w_G(I)=d_G(u)+\sum_{v\in I\setminus\{u\}}d_G(v)\geq d_G(u)+d_G(u)\frac{n-\delta}{d_G(u)}=d_G(u) +n-\delta\geq n,
  \end{align*}
  as desired.
\end{proof}

Next, we introduce another new invariant $\alpha^*(G)$.
We say $I\in\mathcal{I}(G)$ is \textit{light} if $w_G(I)\leq |G|-1$.
The \textit{light independence number} of a graph $G$ is defined by $\alpha^*(G)=\max\{|I|\mid I\in\mathcal{I}(G), I \textrm{\:is\:light}\}$.
For any graph $G$, we know that $\alpha^*(G)\geq 1$. 
By the definition, $\alpha^*(G)$ is at most the independence number $\alpha(G)$,
and the gap $\alpha(G)-\alpha^*(G)$ can be arbitrarily large.
For example, the complete bipartite graph $K_{n,n}$ satisfies $\alpha^*(K_{n,n})=1$ while $\alpha^*(K_{n,n})=n$.

The next proposition gives a relation of $\alpha^*(G)$ and the number of parts of 2-proper partition $\mathcal{P}$ in Theorem \ref{sigma}.

\begin{proposition}\label{thmrelation2}
  For any non-complete graph $G$ with $\delta(G)\geq 1$, $\alpha^*(G)\leq \frac{2(n-1)}{\sigma_2(G)}$.
\end{proposition}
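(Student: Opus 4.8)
The plan is to pick a light independent set $I$ of maximum size, so that $|I|=\alpha^*(G)$ and $w_G(I)\le n-1$, and then to bound $|I|$ by double counting over the pairs of vertices inside $I$. Recall that every single vertex is a light independent set, so $|I|\ge 1$, and that non-completeness of $G$ guarantees $\sigma_2(G)<+\infty$ while $\delta(G)\ge 1$ guarantees $\sigma_2(G)>0$, so the right-hand side of the claimed inequality is a well-defined positive real.

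First I would dispose of the degenerate case $|I|=1$: here $I$ contains no non-adjacent pair, so one instead appeals directly to non-completeness of $G$. Choosing non-adjacent vertices $u,v\in V(G)$, each has degree at most $n-2$, hence $\sigma_2(G)\le d_G(u)+d_G(v)\le 2(n-2)<2(n-1)$, which gives $\alpha^*(G)=1\le\frac{2(n-1)}{\sigma_2(G)}$.

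For the main case $|I|\ge 2$, the point is that since $I$ is independent, every pair of distinct $u,v\in I$ is non-adjacent and therefore $d_G(u)+d_G(v)\ge\sigma_2(G)$. Summing this over all $\binom{|I|}{2}$ pairs, each vertex of $I$ gets counted $|I|-1$ times, so the key inequality is
\[
(|I|-1)\,w_G(I)=\sum_{\{u,v\}\subseteq I}\bigl(d_G(u)+d_G(v)\bigr)\ \ge\ \binom{|I|}{2}\sigma_2(G)=\frac{|I|(|I|-1)}{2}\,\sigma_2(G).
\]
Dividing through by $|I|-1>0$ and using $w_G(I)\le n-1$ yields $n-1\ge\frac{|I|}{2}\sigma_2(G)$, i.e.\ $|I|\le\frac{2(n-1)}{\sigma_2(G)}$, which is the claim.

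There is no serious obstacle here: the argument is a one-line averaging estimate. The only places needing a little care are the finiteness and positivity of $\sigma_2(G)$ (handled by the hypotheses) and the separate, slightly ad hoc treatment of $|I|=1$, where the averaging trick has nothing to average over.
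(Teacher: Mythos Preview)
Your proof is correct and follows essentially the same approach as the paper: both separate out the trivial case $\alpha^*(G)=1$ and then, for $|I|\ge 2$, average the inequality $d_G(u)+d_G(v)\ge\sigma_2(G)$ over non-adjacent pairs in $I$ to obtain $w_G(I)\ge\frac{|I|}{2}\sigma_2(G)$. The only cosmetic difference is that the paper sums over the $|I|$ consecutive pairs in a cyclic ordering of $I$ (so each vertex is counted twice), whereas you sum over all $\binom{|I|}{2}$ pairs (so each vertex is counted $|I|-1$ times); both yield the same bound after cancellation.
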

  
\begin{proof}
  Assume that $G$ is non-complete and $\delta(G)\geq 1$.
  Since $\sigma_2(G)< 2(n-1)$, i.e., $\frac{2(n-1)}{\sigma_2(G)}>1$, 
  the statement holds when $\alpha^*(G)=1$.
  Hence we assume $r:=\alpha^*(G)\geq 2$.
  Let $I=\{u_0,u_1,\dots ,u_{r-1}\}$ be a light independent set of $G$.
  Then we have
  \begin{align*}
    2(n-1)\geq 2w_G(I)=\sum_{i=0}^{r-1}(d_G(u_i)+d_G(u_{i+1}))
    \geq \sum_{i=0}^{r-1}\sigma_2(G)=r\sigma_2(G)
  \end{align*}
  where $u_r=u_0$.
  Thus $\alpha^*(G)=r\leq \frac{2(n-1)}{\sigma_2(G)}$.
\end{proof}

\subsection{Main result}\label{mainrslt}

Using two invariants $\sigma^*(G)$ and $\alpha^*(G)$, we give a sufficient condition for the existence of a 2-proper partition with small number of parts.

\begin{theorem}\label{ind}
  Let $G$ be a graph of order $n$. 
  If $\sigma^*(G)\geq n$, then either $G$ has a 2-proper partition $\mathcal{P}$ with $|\mathcal{P}|\leq \alpha^*(G)$, 
  or $G$ is isomorphic to a graph in $\{K_2, F_5\}\cup\mathcal{F}_{11}\cup\mathcal{F}_{12}\cup\mathcal{H}_n$, 
  where the exceptional graphs are defined below.
\end{theorem}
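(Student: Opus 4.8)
The plan is to argue by induction on $n=|G|$, in the spirit of the block-decomposition construction behind Theorem~\ref{delta}. The cases $n\le2$ are settled by inspection ($K_2$ being the only exception there), so assume $n\ge3$. If $G$ is disconnected with components $G_1,\dots,G_t$, then each $G_i$ has $\sigma^*(G_i)\ge\sigma^*(G)\ge n\ge|G_i|$ (a large independent set of $G_i$ is a large independent set of $G$ of the same weight), and concatenating light independent sets of the $G_i$ gives $\alpha^*(G)\ge\sum_i\alpha^*(G_i)$; so applying the induction hypothesis componentwise finishes, unless some $G_i$ is exceptional, a case one must either rule out under $\sigma^*(G)\ge n$ or fold into $\mathcal H_n$. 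If $G$ is $2$-connected, then $\{V(G)\}$ works since $\alpha^*(G)\ge1$. So we may assume $G$ is connected with a cut vertex; applying the defining inequality of $\sigma^*$ to a pair $\{u,v\}$ with $d_G(u)\le1$ shows $\delta(G)\ge2$, hence $G$ has no pendant vertex and every end-block of $G$ is $2$-connected.

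For the inductive step I would localise at a cut vertex $c$ at most one of whose ``sides'' is non-pendant: let $C_1,\dots,C_{m-1}$ ($m\ge2$) be components of $G-c$ containing no cut vertex of $G$, so each $B_i:=G[C_i\cup\{c\}]$ is a $2$-connected end-block, and let $C_m$ be one further component of $G-c$ (chosen to contain a cut vertex of $G$ when one exists), with $H:=G[C_m\cup\{c\}]$ connected and $|H|<n$. By induction each strictly smaller piece $G[C_i]$ either has a $2$-proper partition with at most $\alpha^*(G[C_i])$ parts, which we use verbatim, or, failing that, is absorbed together with $c$ as the $2$-connected induced subgraph $V(B_i)$. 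At most one $B_i$ can claim $c$ this way, so if two of the $G[C_i]$ admit no $2$-proper partition at all, then in any would-be $2$-proper partition of $G$ the part containing $c$ must meet two distinct $C_i$ and so disconnects after deleting $c$ --- hence $G$ has no $2$-proper partition and is exceptional. In every other case one builds a $2$-proper partition of $G$ from one of $H$ (or of $G[C_m]$, when some $B_i$ has claimed $c$) by reattaching the pendant pieces, and charges the added parts against $\alpha^*$ via this augmentation: if a maximum light independent set $J$ of the recursed-on graph can be chosen to avoid $c$, then adjoining one low-degree vertex $v_i\in C_i$ from each reattached piece keeps the set independent (as $N_G(v_i)\subseteq V(B_i)$) and light (as $d_G(v_i)\le|C_i|$ while $\sum_{i<m}|C_i|$ equals exactly the drop in order), so that
\begin{align*}
 \alpha^*(G)\ \ge\ \alpha^*(H)+(m-1),
\end{align*}
and similarly with $G[C_m]$ in place of $H$.

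Two points carry the real difficulty. First, to invoke the induction hypothesis on $H$ (or $G[C_m]$) one must know it again has $\sigma^*$ at least its own order. Unwinding a hypothetical counterexample $I'$ forces $c\in I'$ with $c$ realising $\delta_H(I')$, and then --- using $\delta(G)\ge2$, the fact that $c$ has at least $m-1\ge1$ neighbours inside $\bigcup_{i<m}C_i$, and the ``light but not large in $G$'' alternative --- one either closes the loop or is left with a short list of small configurations; this bookkeeping is the first obstacle. Second, the augmentation step breaks down exactly when $c$ lies in \emph{every} maximum light independent set of the recursed-on graph, and the ``claim $c$'' maneuver is blocked when the part of the recursive partition holding $c$ is too small to absorb the pendant block; these failures, together with the two-bad-pendant-blocks obstruction and the possibility that $H$ itself is exceptional, are exactly where the sporadic graph $F_5$ (the bowtie, the smallest graph with $\sigma^*=+\infty$ that has no $2$-proper partition) and the families $\mathcal F_{11}$, $\mathcal F_{12}$, $\mathcal H_n$ come from. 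I expect the bulk of the write-up to be the exhaustive case analysis of these boundary configurations --- showing that every graph outside $\{K_2,F_5\}\cup\mathcal F_{11}\cup\mathcal F_{12}\cup\mathcal H_n$ escapes all of them --- while the reverse direction, that each listed graph really is an exception, is a direct check.
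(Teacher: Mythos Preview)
Your plan diverges substantially from the paper's argument, and in its current form it has two concrete gaps that cannot be patched by ``exhaustive case analysis'' alone.

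\medskip

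\textbf{Gap 1: the $\alpha^*$ augmentation is wrong as stated.} You claim that if $J$ is a maximum light independent set of $G[C_m]$ avoiding $c$, then adjoining one vertex $v_i\in C_i$ per reattached block keeps the set light because $d_G(v_i)\le|C_i|$ and $\sum_{i<m}|C_i|$ equals the drop in order. But you are comparing $w_{G[C_m]}(J)$ with $w_G(J\cup\{v_i\})$, and these differ not only by $d_G(v_i)$: every vertex of $J$ adjacent to $c$ in $G$ has its degree go up by one. Concretely, take $G=H_{s,t}$ with $3\le s\le t$; the only cut vertex is $a$, with pendant triangle $\{a,c_1,c_2\}$ and $C_2=\{b\}\cup V(S_1)\cup V(S_2)$. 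A maximum light independent set of $G[C_2]$ is $\{x,y\}$ with $x\in V(S_1)$, $y\in V(S_2)$, of weight $(s-1)+(t-1)=|C_2|-1$. But in $G$ both $x$ and $y$ are adjacent to $a$, so $w_G(\{x,y,c_1\})=s+t+2=n$, which is \emph{not} light. Thus your inequality $\alpha^*(G)\ge\alpha^*(G[C_m])+(m-1)$ fails here (indeed $\alpha^*(H_{s,t})=2$ while $\alpha^*(G[C_2])=2$), and nothing in your plan flags $H_{s,t}$ before this point.

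\medskip

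\textbf{Gap 2: preservation of $\sigma^*$ under removing $c$.} When some $B_i$ ``claims'' $c$ you recurse on $G[C_m]$, but $\sigma^*(G[C_m])\ge|C_m|$ need not hold. Again in $H_{2,t}$, the set $\{x,y\}$ with $x\in V(S_1)$, $y\in V(S_2)$ is large in $G[C_2]$ (since $d_{G[C_2]}(x)=1$) with weight $t<t+1=|C_2|$. Your sketch of how to close this (``unwinding a hypothetical counterexample forces $c\in I'$\dots'') addresses the recursion on $H$, not on $G[C_m]$, and even for $H$ it does not explain how to handle the case where $I'$ is large in $H$ but not in $G$.

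\medskip

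\textbf{What the paper does instead.} The paper avoids both problems by \emph{not} recursing on $\sigma^*$. It works with the full block--cut-vertex tree and proves a structural lemma (Claim~\ref{xblock}): for every block $B$ and every cut vertex $u\in V(B)$, the set $X_B=V(B)\setminus U$ of non-cut vertices lies in a single block $A_B$ of $B-u$. Together with two adjacency claims (Claims~\ref{xadjacent1} and~\ref{xadjacent2}), this lets one build a $2$-proper partition of $G$ directly, indexed by $\tilde{\mathcal{B}}=\{B:X_B\ne\emptyset\}$, and bound its size by $\alpha^*(G)$ via the single light independent set $\{x_B:B\in\tilde{\mathcal{B}}\}$ with $x_B\in X_B$ (Claim~\ref{fbsize}). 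The only place the paper checks a $\sigma^*$ inequality on a subgraph is the controlled situation of Claim~\ref{exception}, where the failure of Claim~\ref{xblock} supplies two non-adjacent vertices $x_1,x_2\in X_B$ with $|B|\ge 2d_G(x_1)$ --- exactly the slack needed. The exceptions then arise in two places: $F_5$ and $\mathcal{H}_n$ from end-blocks of order~$3$ (Claim~\ref{3end}), and $\mathcal{F}_{11},\mathcal{F}_{12}$ from the analysis when Claim~\ref{xblock} fails. Your plan does not isolate Claim~\ref{xblock}, and without it the induction does not close.
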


\begin{figure}
  \centering
  \begin{minipage}{0.24\columnwidth}
    \centering
    \includegraphics[width=0.8\columnwidth]{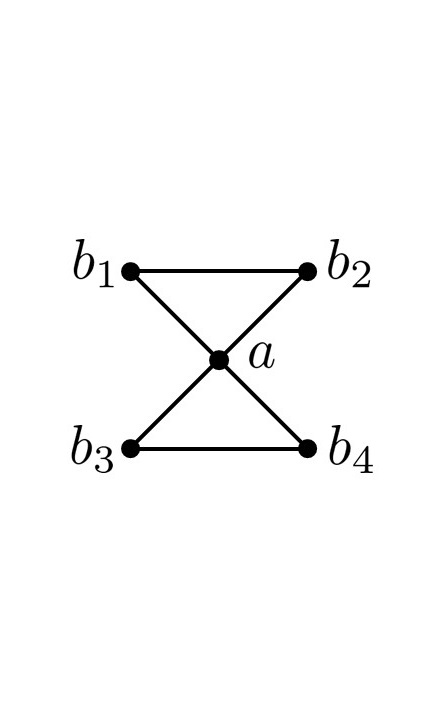}
    \caption{$F_5$}
    \label{fig:f5}
  \end{minipage}
  \begin{minipage}{0.24\columnwidth}
    \centering
    \includegraphics[width=0.8\columnwidth]{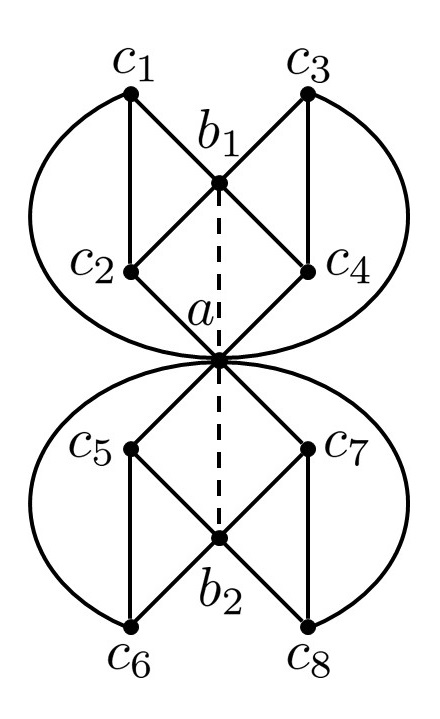}
    \caption{Graphs in $\mathcal{F}_{11}$}
    \label{fig:f11}
  \end{minipage}
  \begin{minipage}{0.24\columnwidth}
    \centering
    \includegraphics[width=0.8\columnwidth]{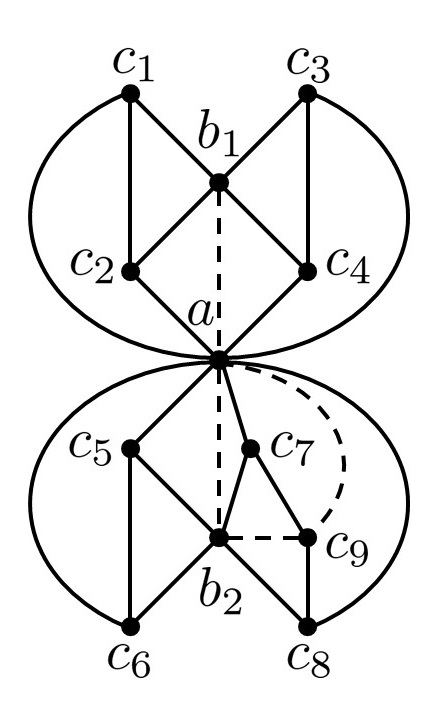}
    \caption{Graphs in $\mathcal{F}_{12}$}
    \label{fig:f12}
  \end{minipage}
  \begin{minipage}{0.24\columnwidth}
    \centering
    \includegraphics[width=0.8\columnwidth]{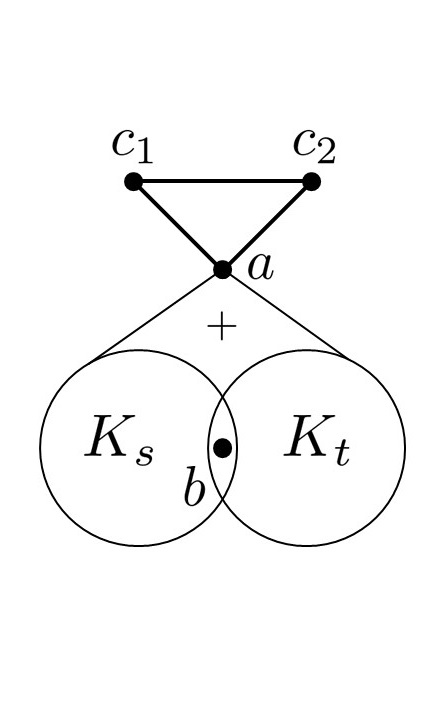}
    \caption{$H_{s,t}$}
    \label{fig:hn}
  \end{minipage}
\end{figure}

We define exceptional graphs in Theorem \ref{ind} (Figures \ref{fig:f5}-\ref{fig:hn}).
\begin{itemize}
  \item A graph $F_5$ is defined by $V(F_5)=\{a\}\cup\{b_i \mid i\in[4]\}$
  and $E(F_5)=\{ab_i\mid i\in[4]\}\cup\{b_1b_2, b_3,b_4\}$.
  \item Let $F$ be a graph having the vertex set $V(F)=\{a,b_1,b_2\}\cup\{c_i\mid i\in[8]\}$
  and the edge set $E(F)=\{ac_i\mid i\in[8]\}\cup\{b_1c_i\mid i\in[4]\}\cup\{b_2c_i\mid i\in[8]\setminus[4]\}
  \cup\{c_1c_2, c_3c_4, c_5c_6, c_7c_8\}\cup L$ where $L$ is a subset of $\{ab_1, ab_2\}$.
  Let $\mathcal{F}_{11}$ be the family of such graphs $F$.
  Note that $\mathcal{F}_{11}$ consists of three graphs up to isomorphic.
  \item Let $F$ be a graph having the vertex set $\{a, b_1, b_2\}\cup\{c_i\mid i\in[9]\}$
  and the edge set $E(F)=\{ac_i\mid i\in[8]\}\cup\{b_1c_i\mid i\in[4]\}\cup\{b_2c_i\mid i\in[8]\setminus[4]\}
  \cup\{c_1c_2, c_3c_4, c_5c_6, c_7c_9, c_8c_9\}\cup L$ 
  where $L$ is a subset of $\{ab_1, ab_2, ac_9, b_2c_9\}$ with $L\cap\{ac_9, b_2c_9\}\neq\emptyset$.
  Let $\mathcal{F}_{12}$ be the family of such graphs $F$.
  Note that $\mathcal{F}_{12}$ consists of twelve graphs.
  \item Let $s$ and $t$ be two integers with $2\leq s\leq t$.
  Let $S_1$ and $S_2$ be vertex disjoint complete graphs of order $s-1$ and $t-1$, respectively.
  Let $H_{s,t}$ be the graph obtained from $S_1$ and $S_2$ by adding four new vertices $a, b, c_1$ and $c_2$
  and the edge set $\{ax, bx\mid x\in V(S_1)\cup V(S_2)\}\cup\{ab, ac_1, ac_2, c_1c_2\}$.
  Let $H_{s,t}^-=H_{s,t}-ab$.
  Note that $|H_{s,t}|=|H_{s,t}^-|=s+t+2$.
  For an integer $n\geq 6$, let $\mathcal{H}_{n}=\{H_{s,t}, H_{s,t}^-\mid 2\leq s\leq t, s+t+2=n\}$.
\end{itemize}

Now we give some remarks on Theorem \ref{ind}.

\begin{remark}\label{reminf}
  The case $\sigma^*(G)=+\infty$ is allowed in Theorem \ref{ind}.
\end{remark}

\begin{remark}\label{remex}
  If $s$ and $t$ are integers with $3\leq s\leq t$ and $t\geq 4$, 
  then both $H_{s,t}$ and $H_{s,t}^-$ have a 2-proper partition with 3 parts
  while $\alpha^*(H_{s,t})=\alpha^*(H_{s,t}^-)=2$.
  Other exceptional graphs have no 2-proper partition.
\end{remark}

Using Propositions \ref{thmrelation1} and \ref{thmrelation2},
we can show some corollaries to Theorem \ref{ind}.
For each even integer $n\geq 6$, we define $\mathcal{H}_n^=$ by 
$\mathcal{H}_{n}^==\{H_{s,s}, H_{s,s}^-\mid 2s+2=n\}$.
Note that $\mathcal{H}_n^=$ is a subset of $\mathcal{H}_n$.
We obtain the minimum degree product condition for the existence of a 2-proper partition
as a corollary of Theorem \ref{ind}.

\begin{corollary}\label{pi}
  Let $G$ be a graph of order $n$ with minimum degree $\delta$. 
  If $\pi_2(G)\geq n-\delta$, then either $G$ has a 2-proper partition $\mathcal{P}$ with $|\mathcal{P}|\leq \alpha^*(G)$, 
  or $G\in\{K_2, F_5\}\cup\mathcal{F}_{11}\cup\mathcal{F}_{12}\cup\mathcal{H}_n^=$.
\end{corollary}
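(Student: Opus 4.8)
The plan is to deduce Corollary~\ref{pi} directly from Theorem~\ref{ind} together with the latter part of Proposition~\ref{thmrelation1}. First I would observe that the hypothesis $\pi_2(G)\geq n-\delta$ implies $\sigma^*(G)\geq n$ by Proposition~\ref{thmrelation1}, so Theorem~\ref{ind} applies: either $G$ admits a 2-proper partition $\mathcal{P}$ with $|\mathcal{P}|\leq\alpha^*(G)$, in which case there is nothing more to prove, or $G$ is isomorphic to a graph in $\{K_2,F_5\}\cup\mathcal{F}_{11}\cup\mathcal{F}_{12}\cup\mathcal{H}_n$. If $G$ lies in $\{K_2,F_5\}\cup\mathcal{F}_{11}\cup\mathcal{F}_{12}$, it already belongs to the exceptional set of the corollary, so the only real work is to show that the extra hypothesis $\pi_2(G)\geq n-\delta$ forces a graph in $\mathcal{H}_n$ to lie in the smaller family $\mathcal{H}_n^=$; that is, it forces $s=t$.

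For this last step I would take $G=H_{s,t}$ (or $G=H_{s,t}^-$) with $2\leq s\leq t$ and $s+t+2=n$ and compute the relevant parameters directly from the definition. A short degree count shows that every vertex of $S_1$ has degree $s$, every vertex of $S_2$ has degree $t$, the vertices $c_1,c_2$ have degree $2$, and $a,b$ have large degree; hence $\delta(G)=2$ and $n-\delta=s+t$. The nonadjacent pairs of $G$ are $c_1$ (or $c_2$) against $b$ and against the vertices of $V(S_1)\cup V(S_2)$, one vertex of $S_1$ against one vertex of $S_2$, and (only in $H_{s,t}^-$) the pair $ab$; taking the minimum of the corresponding degree products and using $2\leq s\leq t$ gives $\pi_2(G)=2s$ in both cases. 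Then $\pi_2(G)\geq n-\delta$ reads $2s\geq s+t$, i.e. $s\geq t$, so $s=t$, whence $G\in\mathcal{H}_n^=$ (and $n$ is necessarily even), completing the proof.

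The argument is essentially a reduction, so there is no serious obstacle; the only point requiring care is the degree-product computation for the graphs in $\mathcal{H}_n$, and in particular checking that $2s$ is indeed the minimum over all nonadjacent pairs, which uses $s\leq t$ and $s\geq 2$ and must also be verified against the extra nonadjacent pair $ab$ present in $H_{s,t}^-$. I would also note in passing that $K_2$ genuinely has to remain in the exceptional list because $\pi_2(K_2)=+\infty\geq n-\delta$; one could separately ask whether the hypothesis is met by the members of $\mathcal{F}_{11}$ and $\mathcal{F}_{12}$, but since those graphs are already listed among the exceptions, such a check is not needed for the statement as phrased.
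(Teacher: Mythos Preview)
Your proposal is correct and follows essentially the same route as the paper's own proof: both apply Proposition~\ref{thmrelation1} and Theorem~\ref{ind}, and then eliminate the graphs $H_{s,t}$ and $H_{s,t}^-$ with $s<t$ by computing $\delta=2$ and $\pi_2(G)=2s<s+t=n-\delta$. The only difference is cosmetic---the paper argues by contradiction on $s<t$ while you argue directly that $\pi_2(G)\geq n-\delta$ forces $s\geq t$---and your additional verification that $2s$ really is the minimum over all nonadjacent pairs (including the pair $a,b$ in $H_{s,t}^-$) simply makes explicit what the paper asserts without detail.
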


\begin{proof}
  Let $G$ be a graph of order $n$ with the minimum degree $\delta$.
  We assume that $\pi_2(G)\geq n-\delta$ and $G$ is isomorphic to no graph in $\{K_2, F_5\}\cup\mathcal{F}_{11}\cup\mathcal{F}_{12}\cup\mathcal{H}_n^=$.
  If $G$ is isomorphic to $H_{s,t}$ or $H_{s,t}^-$ for some integers $s$ and $t$ with $2\leq s<t$, 
  then we have $\delta=2$ and $\pi_2(G)=2s$, 
  which implies $\pi_2(G)=2s<s+t=n-2=n-\delta$, a contradiction.
  Hence $G$ is isomorphic to no graph in $\{K_2, F_5\}\cup\mathcal{F}_{11}\cup\mathcal{F}_{12}\cup\mathcal{H}_n$.
  By Theorem \ref{ind} and Proposition \ref{thmrelation1}, 
  $G$ has a 2-proper partition $\mathcal{P}$ with $|\mathcal{P}|\leq \alpha^*(G)$.
\end{proof}

For every even integer $n\geq 10$ and every $G\in\mathcal{H}^=_n$, 
we have $\frac{2(|G|-1)}{\sigma_2(G)}\geq 3$.
This together with Remark \ref{remex} implies that if $G$ is isomorphic to a graph in $\bigcup_{l\geq 5}\mathcal{H}_{2l}^=$,
then there exists a 2-proper partition $\mathcal{P}$ with $|\mathcal{P}|\leq \frac{2(n-1)}{\sigma_2(G)}$.
Thus, by using Propositions \ref{thmrelation1}, \ref{thmrelation2} and Corollary \ref{pi}, 
we obtain the following result that is stronger than Theorem \ref{sigma}.
(Note that if $G$ is isomorphic to a graph in $\{F_5\}\cup\mathcal{F}_{11}\cup\mathcal{F}_{12}\cup\mathcal{H}_6^=\cup\mathcal{H}_8^=$,
then $\sqrt{|G|}-1<\delta(G)<\sqrt{|G|}$.)

\begin{corollary}\label{sigmarevise}
  Let $G$ be a non-complete graph of order $n$ with minimum degree $\delta\geq 1$.
  If $\sigma_2(G)\geq\frac{n}{\delta}+\delta-1$, then either $G$ has a 2-proper partition $\mathcal{P}$ with $|\mathcal{P}|\leq \frac{2(n-1)}{\sigma_2(G)}$,
  or $G\in\{F_5\}\cup\mathcal{F}_{11}\cup\mathcal{F}_{12}\cup\mathcal{H}_6^=\cup\mathcal{H}_8^=$.
\end{corollary}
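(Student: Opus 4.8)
The plan is to obtain Corollary~\ref{sigmarevise} as a consequence of Corollary~\ref{pi} together with Propositions~\ref{thmrelation1} and~\ref{thmrelation2}, after peeling off the genuinely exceptional small graphs by hand.

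I would start by reducing to the case where $G$ is a non-complete graph of order $n$ with minimum degree $\delta\geq 1$ satisfying $\sigma_2(G)\geq\frac{n}{\delta}+\delta-1$ and isomorphic to no graph in $\{F_5\}\cup\mathcal{F}_{11}\cup\mathcal{F}_{12}\cup\mathcal{H}_6^=\cup\mathcal{H}_8^=$, the goal being to produce a 2-proper partition $\mathcal{P}$ with $|\mathcal{P}|\leq\frac{2(n-1)}{\sigma_2(G)}$. The former statement of Proposition~\ref{thmrelation1} turns the hypothesis on $\sigma_2(G)$ into $\pi_2(G)\geq n-\delta$, so Corollary~\ref{pi} applies and yields one of two alternatives: either $G$ has a 2-proper partition $\mathcal{P}$ with $|\mathcal{P}|\leq\alpha^*(G)$, or $G\in\{K_2,F_5\}\cup\mathcal{F}_{11}\cup\mathcal{F}_{12}\cup\mathcal{H}_n^=$.

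In the first alternative I would simply apply Proposition~\ref{thmrelation2}: since $G$ is non-complete with $\delta(G)\geq 1$, it gives $\alpha^*(G)\leq\frac{2(n-1)}{\sigma_2(G)}$, and hence $|\mathcal{P}|\leq\frac{2(n-1)}{\sigma_2(G)}$. In the second alternative I would rule out each possibility in turn: $G=K_2$ is complete, contradicting the hypothesis; $G\in\{F_5\}\cup\mathcal{F}_{11}\cup\mathcal{F}_{12}$ was excluded at the outset; and if $G\in\mathcal{H}_n^=$ then $G\notin\mathcal{H}_6^=\cup\mathcal{H}_8^=$ forces $n\geq 10$, i.e.\ $G$ is $H_{s,s}$ or $H_{s,s}^-$ with $s=\frac{n}{2}-1\geq 4$. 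For such $G$, Remark~\ref{remex} (applied with $s=t\geq 4$) supplies a 2-proper partition with exactly $3$ parts, while the inequality $\frac{2(n-1)}{\sigma_2(G)}\geq 3$, which holds for every member of $\mathcal{H}_n^=$ with $n\geq 10$ by a direct degree count and is recorded just before the statement, shows that this partition is within the required bound. This exhausts all cases.

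I do not expect any real obstacle here; the argument is a direct assembly of earlier results. The only point needing attention is the bookkeeping among $\mathcal{H}_n$ (all the family members appearing as exceptions in Theorem~\ref{ind}), its balanced subfamily $\mathcal{H}_n^=$ (the only part of $\mathcal{H}_n$ surviving the $\pi_2$-hypothesis of Corollary~\ref{pi}), and the two small cases $\mathcal{H}_6^=$, $\mathcal{H}_8^=$ that remain genuine exceptions of Corollary~\ref{sigmarevise} precisely because for them Remark~\ref{remex} provides no 2-proper partition and the quantity $\frac{2(n-1)}{\sigma_2(G)}$ falls strictly below $3$. In particular one should check that the threshold case $n=10$, where $\frac{2(n-1)}{\sigma_2(G)}=3$ exactly, lands on the non-exceptional side.
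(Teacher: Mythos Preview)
Your proposal is correct and follows essentially the same approach as the paper: apply Proposition~\ref{thmrelation1} to pass from the $\sigma_2$ hypothesis to the $\pi_2$ hypothesis, invoke Corollary~\ref{pi}, bound $\alpha^*(G)$ via Proposition~\ref{thmrelation2} in the non-exceptional case, and handle the remaining graphs in $\mathcal{H}_n^=$ with $n\geq 10$ via Remark~\ref{remex} together with the computation $\frac{2(n-1)}{\sigma_2(G)}\geq 3$. The paper's argument, sketched in the paragraph preceding the corollary, is identical in structure and content.
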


Now we introduce a relaxed concept of 2-proper partition as follows.
Let $G$ be a graph and $\mathcal{P}=\{V_1, V_2, \dots , V_r\}$ be a partition of $V(G)$.
We say $\mathcal{P}$ is an \textit{almost 2-proper partition} of $G$ 
if $\mathcal{P}$ satisfies the following two conditions.
\begin{enumerate}
  \item Either $G[V_1]$ is 2-connected, or $G[V_1]$ is isomorphic to $K_2$.
  \item For every $i\in\{2,3,\dots , r\}$, $G[V_i]$ is 2-connected.
\end{enumerate}
Every 2-proper partition of a graph $G$ is an almost 2-proper partition of $G$ as well.
Furthermore, we can easily verify that all exceptional graphs $G$ in Theorem \ref{ind}
have also almost 2-proper partitions $\mathcal{P}$ with $|\mathcal{P}|\leq\alpha^*(G)$.
(Note that $H_{s,t}$ has an almost 2-proper partition $\{\{c_1,c_2\}, V(H_{s,t})\setminus\{c_1,c_2\}\}$
for integers $s$ and $t$ with $2\leq s\leq t$.)
Thus we obtain the following result follows from Theorem \ref{ind}.

\begin{corollary}\label{almost2pp}
  Let $G$ be a graph of order $n$.
  If $\sigma^*(G)\geq n$, then $G$ has an almost 2-proper partition $\mathcal{P}$ with $|\mathcal{P}|\leq \alpha^*(G)$.
\end{corollary}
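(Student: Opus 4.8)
The plan is to read Corollary~\ref{almost2pp} off Theorem~\ref{ind}. Let $G$ be a graph of order $n$ with $\sigma^*(G)\geq n$. By Theorem~\ref{ind}, either $G$ has a $2$-proper partition $\mathcal{P}$ with $|\mathcal{P}|\leq\alpha^*(G)$, or $G$ is isomorphic to one of the explicitly described graphs in $\{K_2,F_5\}\cup\mathcal{F}_{11}\cup\mathcal{F}_{12}\cup\mathcal{H}_n$. In the first case we are done immediately, since by definition every $2$-proper partition is an almost $2$-proper partition (clause~(1) of the definition permits $G[V_1]$ to be $2$-connected). So the only real work is to produce, for each exceptional graph, an explicit almost $2$-proper partition with at most $\alpha^*$ parts.

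I would then handle the five families, writing down in each a single partition that works uniformly. For $K_2$: take the one block $\{V(K_2)\}$; since $\alpha^*(K_2)=1$ and the block induces $K_2$, clause~(1) is satisfied with $1\leq\alpha^*$ part. For $F_5$ (two triangles sharing the vertex $a$): take $\mathcal{P}=\{\{b_1,b_2\},\{a,b_3,b_4\}\}$, whose first part induces $K_2$ and whose second induces a triangle; a direct degree count gives $\alpha^*(F_5)=2=|\mathcal{P}|$. For every $F\in\mathcal{F}_{11}$: take $\mathcal{P}=\{\{c_1,c_2\},\{b_1,c_3,c_4\},\{a,b_2,c_5,c_6,c_7,c_8\}\}$; the first part is $K_2$, the second a triangle, and the third is $2$-connected because $a$ and $b_2$ are both adjacent to all of $c_5,c_6,c_7,c_8$, so deleting $a$, or $b_2$, or any single $c_i$ leaves it connected, and one checks $\alpha^*(F)=3=|\mathcal{P}|$. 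For every $F\in\mathcal{F}_{12}$: take the analogous $\mathcal{P}=\{\{c_1,c_2\},\{b_1,c_3,c_4\},\{a,b_2,c_5,c_6,c_7,c_8,c_9\}\}$, where the third part is again $2$-connected ($c_9$ hangs on $c_7$ and $c_8$, while $a$ and $b_2$ still dominate $c_5,\dots,c_8$), and $\alpha^*(F)=3=|\mathcal{P}|$. Finally, for $G\in\mathcal{H}_n$, i.e.\ $G=H_{s,t}$ or $G=H_{s,t}^-$ with $2\leq s\leq t$: take $\mathcal{P}=\{\{c_1,c_2\},V(G)\setminus\{c_1,c_2\}\}$; the first part induces $K_2$, and $V(G)\setminus\{c_1,c_2\}=V(S_1)\cup V(S_2)\cup\{a,b\}$ (which has at least four vertices) is $2$-connected because $a$ and $b$ are each joined to every vertex of $V(S_1)\cup V(S_2)$ and each $S_i$ is complete, so by Remark~\ref{remex} we have $\alpha^*(H_{s,t})=\alpha^*(H_{s,t}^-)=2=|\mathcal{P}|$.

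There is no genuine obstacle here; the step needing the most care is the bookkeeping. One must confirm the stated values $\alpha^*(F_5)=2$ and $\alpha^*(F)=3$ for $F\in\mathcal{F}_{11}\cup\mathcal{F}_{12}$, that is, that no light independent set of size one larger exists. This is a short finite check, eased by the observation that in these graphs all of $c_1,c_2,\dots$ have degree $3$, the vertices $b_1,b_2$ have small degree (at most $6$), and $a$ has large degree, so any independent set whose size exceeds the claimed $\alpha^*$ already has weight exceeding $|G|-1$ and hence fails to be light. The $2$-connectivity verifications are similarly routine, each ``bad'' part being dominated by two vertices of high degree; and since the same partition formula works across all three graphs of $\mathcal{F}_{11}$, all twelve of $\mathcal{F}_{12}$, and the entire two-parameter family $\mathcal{H}_n$, the case analysis stays short.
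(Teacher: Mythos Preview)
Your approach is exactly the paper's: apply Theorem~\ref{ind} and then verify directly that each exceptional graph admits an almost $2$-proper partition with at most $\alpha^*$ parts. The paper leaves this as ``easily verify'' (mentioning only the $\{\{c_1,c_2\},V(H_{s,t})\setminus\{c_1,c_2\}\}$ partition), and your case-by-case constructions are correct.

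One small citation issue: you invoke Remark~\ref{remex} to conclude $\alpha^*(H_{s,t})=\alpha^*(H_{s,t}^-)=2$ for all $2\leq s\leq t$, but Remark~\ref{remex} only asserts this for $3\leq s\leq t$ with $t\geq 4$. Since your partition has exactly two parts, you only need $\alpha^*\geq 2$, and this follows immediately from Lemma~\ref{alphacut} (the vertex $a$ is a cut-vertex of every graph in $\mathcal{H}_n$), so the fix is trivial.
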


In Section 2, we give a proof of Theorem \ref{ind}, and in Section 3,
we give some graphs to show the sharpness of Theorem \ref{ind}.

\section{Proof of Theorem \ref{ind}}

\subsection{Fundamental properties of $\sigma^*$ and $\alpha^*$}\label{lems}

We can easily verify the following lemma for $\sigma^*(G)$.
Note that the same property holds for the minimum degree, the minimum degree sum and the minimum degree product as well.

\begin{lemma}\label{sigmacomp}
  Let $G$ be a graph and $G_1, G_2,\dots , G_k$ be the components of $G$.
  Then $\sigma^*(G)\leq \min\{\sigma^*(G_i)\mid i\in [k]\}$.
\end{lemma}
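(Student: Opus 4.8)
The plan is to prove Lemma~\ref{sigmacomp} directly from the definition of $\sigma^*$ by exhibiting, for each component $G_i$, a large independent set of $G$ whose weight equals $\sigma^*(G_i)$. First I would dispose of the trivial case: if some $G_i$ has no large independent set, then $\sigma^*(G_i)=+\infty$ and the inequality $\sigma^*(G)\le\sigma^*(G_i)$ is automatic, so I may assume every $G_i$ has a large independent set and in particular $\sigma^*(G_i)<+\infty$ for all $i$. Fix an index $j$ with $\sigma^*(G_j)=\min\{\sigma^*(G_i)\mid i\in[k]\}$.

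Next I would pick a large independent set $I$ of $G_j$ with $w_{G_j}(I)=\sigma^*(G_j)$ and argue that $I$ is itself a large independent set of $G$. The key observation is that degrees are unchanged under taking components: for every $u\in V(G_j)$ we have $d_G(u)=d_{G_j}(u)$, since all neighbours of $u$ lie in $G_j$. Consequently $I$ is independent in $G$, $d_G(u)=d_{G_j}(u)$ for $u\in I$, hence $\delta_G(I)=\delta_{G_j}(I)$ and $w_G(I)=w_{G_j}(I)$. From $|I|\ge\delta_{G_j}(I)+1=\delta_G(I)+1$ we conclude $I$ is a large independent set of $G$.

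Therefore $\sigma^*(G)\le w_G(I)=w_{G_j}(I)=\sigma^*(G_j)=\min\{\sigma^*(G_i)\mid i\in[k]\}$, which is exactly the claimed inequality.

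There is no real obstacle here; the only point requiring a moment's care is the observation that the degree of a vertex is computed within its own component, so both $\delta_G$ and $w_G$ restricted to a subset of a single component agree with the corresponding quantities computed inside that component — this is what makes "large" a property that transfers verbatim between $G_j$ and $G$. The remark in the statement that the analogous monotonicity holds for $\delta$, $\sigma_2$ and $\pi_2$ is a sanity check rather than something to be used, and I would not invoke it in the argument.
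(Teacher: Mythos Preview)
Your proof is correct and follows essentially the same approach as the paper: both arguments take a large independent set $I$ of a component $G_i$ attaining $\sigma^*(G_i)$, observe that degrees computed in $G$ and in $G_i$ agree on $I$, and conclude that $I$ is a large independent set of $G$ with $w_G(I)=\sigma^*(G_i)$. The only cosmetic difference is that the paper proves $\sigma^*(G)\le\sigma^*(G_i)$ for every $i$ while you fix a minimizing index $j$ first, which makes no substantive difference.
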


\begin{proof}
  Fix $i\in[k]$ and we prove $\sigma^*(G)\leq \sigma^*(G_i)$.
  If $\sigma^*(G_i)=+\infty$, then we are done.
  Otherwise, let $I$ be a large independent set of $G_i$ with $w_{G_i}(I)=\sigma^*(G_i)$.
  Then $I$ is an independent set of $G$ as well, and $I$ satisfies
  $|I|\geq \delta_{G_i}(I)+1=\delta_G(I)+1$.
  Hence $\sigma^*(G)\leq w_G(I)=w_{G_i}(I)=\sigma^*(G_i)$.
\end{proof}

For the light independence number $\alpha^*(G)$, we can easily verify the following useful properties.

\begin{lemma}\label{alphacomp}
  Let $G$ be a graph, and $G_1, G_2, \dots, G_k$ be the components of $G$.
  Then $\alpha^*(G)\geq \sum_{i=1}^k \alpha^*(G_i)$.
  As a result, if a graph $G$ has $k$ components, then $\alpha^*(G)\geq k$. 
\end{lemma}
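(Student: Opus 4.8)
The plan is to construct a single light independent set of $G$ as the union of light independent sets drawn from the components. First I would, for each $i\in[k]$, fix a light independent set $I_i$ of $G_i$ with $|I_i|=\alpha^*(G_i)$; such a set exists by the definition of $\alpha^*$, and we may note $|I_i|\geq 1$. Put $I=\bigcup_{i=1}^{k}I_i$.

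Next I would verify that $I$ is an independent set of $G$: since $G_1,\dots,G_k$ are the components of $G$, there are no edges between distinct $G_i$, so any edge inside $I$ would have to lie inside a single $G_i$, and there is none because $I_i$ is independent in $G_i$. I would also record the elementary fact that $d_G(u)=d_{G_i}(u)$ whenever $u\in V(G_i)$, again because $G_i$ is a whole component; consequently $w_G(I)=\sum_{i=1}^{k}w_{G_i}(I_i)$.

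The only inequality to check is then that $I$ is light in $G$. Since each $I_i$ is light in $G_i$, we have $w_{G_i}(I_i)\leq|G_i|-1$, and therefore
\[
w_G(I)=\sum_{i=1}^{k}w_{G_i}(I_i)\leq\sum_{i=1}^{k}\bigl(|G_i|-1\bigr)=|G|-k\leq|G|-1,
\]
where the last step uses $k\geq 1$. Hence $I$ is a light independent set, so $\alpha^*(G)\geq|I|=\sum_{i=1}^{k}|I_i|=\sum_{i=1}^{k}\alpha^*(G_i)$, which is the assertion. The ``as a result'' clause follows at once: a single vertex is always a light independent set (its weight is its degree, which is at most $|G|-1$), so $\alpha^*(G_i)\geq 1$ for every $i$, and the displayed bound gives $\alpha^*(G)\geq\sum_{i=1}^{k}1=k$.

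I do not expect any genuine obstacle here. The only two points that warrant care are the identity $d_G(u)=d_{G_i}(u)$ for $u$ in component $G_i$, which is what makes the weights add across components, and the slack $|G|-k\leq|G|-1$ coming from $k\geq1$, which is exactly what guarantees that the union of light sets remains light rather than just barely exceeding the threshold.
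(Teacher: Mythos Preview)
Your proof is correct and follows essentially the same approach as the paper's own proof: take a maximum light independent set $I_i$ in each component, form $I=\bigcup_i I_i$, and use $\sum_i(|G_i|-1)\leq|G|-1$ to verify that $I$ is light in $G$. Your write-up simply spells out a few routine steps (independence of the union, $d_G=d_{G_i}$ on components, the intermediate bound $|G|-k$) that the paper leaves implicit.
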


\begin{proof}
  For each $i\in [k]$, let $I_i$ be a light independent set of $G_i$ with $|I_i|=\alpha^*(G_i)$.
  Then, $I=\bigcup_{i=1}^k I_i$ is an independent set of $G$ with
  $w_G(I)=\sum_{i=1}^k w_{G_i}(I_i)\leq \sum_{i=1}^{k} (|G_i|-1)\leq |G|-1$.
  Hence $\alpha^*(G)\geq |I|=\sum_{i=1}^k |I_i|=\sum_{i=1}^k\alpha^*(G_i)$.
\end{proof}

If $\alpha^*(G)\leq 1$, then either $G$ is complete, 
or $d_G(u)+d_G(v)\geq |G|$ for every non-adjacent pair of vertices $u,v\in V(G)$,
and hence $G$ is hamiltonian by Ore's theorem~\cite{Ore}.
Thus, the following lemma holds.

\begin{lemma}\label{alphacut}
  Let $G$ be a connected graph.
  If $G$ has a cut-vertex, then $\alpha^*(G)\geq 2$.
\end{lemma}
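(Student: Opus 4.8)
The plan is to prove the contrapositive, leaning on the observation recorded in the paragraph immediately preceding the statement. Since $\alpha^*(G)\ge 1$ holds for every graph $G$, assuming $\alpha^*(G)\le 1$ is the same as assuming $\alpha^*(G)=1$; that observation then says $G$ is either complete or satisfies $d_G(u)+d_G(v)\ge |G|$ for every non-adjacent pair $u,v$, the latter forcing $G$ to be hamiltonian by Ore's theorem~\cite{Ore}. So it suffices to verify that a connected graph with a cut-vertex is neither complete nor hamiltonian; this contradicts $\alpha^*(G)=1$ and leaves $\alpha^*(G)\ge 2$.

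First I would observe that a connected graph $G$ with a cut-vertex $v$ necessarily has $|G|\ge 3$, because $G-v$ has at least two components, each nonempty. Such a $G$ is not complete, since deleting any vertex of $K_m$ with $m\ge 3$ leaves the connected graph $K_{m-1}$; and it is not hamiltonian, since a Hamilton cycle on $|G|\ge 3$ vertices makes $G$ $2$-connected, so that $G-v$ would be connected, contradicting the choice of $v$. These two facts complete the contradiction.

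Honestly, there is no real obstacle here; the only thing to keep straight is the small-order bookkeeping, namely that having a cut-vertex forces $|G|\ge 3$ so that Ore's theorem applies and $K_{m-1}$ is genuinely connected after a deletion. If a reader would prefer an argument not routed through Ore's theorem, one can instead exhibit a light independent set of size $2$ directly: choose $u_i$ in a component $C_i$ of $G-v$ for two distinct indices $i\in\{1,2\}$, with the neighborhood of $u_i$ contained in $V(C_i)\cup\{v\}$; then $d_G(u_1)+d_G(u_2)\le |C_1|+|C_2|\le |G|-1$, so $\{u_1,u_2\}$ is light and $\alpha^*(G)\ge 2$. I would keep the contrapositive version in the text, since the needed observation is already in place, and mention the direct construction only if space allows.
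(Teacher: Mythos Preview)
Your proof is correct and follows essentially the same route as the paper: the paper's argument is precisely the contrapositive via the preceding observation that $\alpha^*(G)\le 1$ forces $G$ to be complete or hamiltonian by Ore's theorem, combined with the (implicit) fact that a connected graph with a cut-vertex is neither. Your additional direct construction of a light independent set of size $2$ is a nice self-contained alternative that the paper does not include.
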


\subsection{Proof of Theorem \ref{ind}}\label{prf}

Let $\mathcal{F}$ be the set of exceptional graphs in Theorem \ref{ind}.
Suppose that the statement is false.
Let $G$ be a counterexample with the minimum order and let $n=|G|$.
If $n=1$, i.e., $G\simeq K_1$, then $\sigma^*(G)=0<n$, which is a contradiction.
Thus $n\geq 2$.
If $G$ has a vertex $u$ with degree at most 1, then there is a vertex $v\in V(G)\setminus N_G(u)$ since $G\not\simeq K_2$.
However $I=\{u, v\}$ is a large independent set of $G$ and 
$\sigma^*(G)\leq w_G(I)=d_G(u)+d_G(v)\leq 1+(n-2)<n$, a contradiction.
Thus the minimum degree of $G$ is at least 2.

\begin{claim}\label{conn}
  $G$ is connected.
\end{claim}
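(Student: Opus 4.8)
The plan is to argue by contradiction, exploiting the minimality of $G$. Suppose $G$ has components $G_1,\dots,G_k$ with $k\ge 2$. By Lemma~\ref{sigmacomp}, $\sigma^*(G_i)\ge\sigma^*(G)\ge n>|G_i|$ for each $i$, so every $G_i$ satisfies the hypothesis of Theorem~\ref{ind} and has strictly smaller order than $G$. By the minimality of $G$, each $G_i$ therefore either admits a $2$-proper partition $\mathcal{P}_i$ with $|\mathcal{P}_i|\le\alpha^*(G_i)$, or is isomorphic to a graph in $\{K_2,F_5\}\cup\mathcal{F}_{11}\cup\mathcal{F}_{12}\cup\mathcal{H}_{|G_i|}$.

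The core of the argument is to exclude the second alternative for every $i$. Since degrees inside a component coincide with degrees in $G$ and $\delta(G)\ge 2$, we have $\delta(G_i)\ge 2$, so $G_i\not\simeq K_2$. For the other exceptional families I would produce one cheap large independent set in $G_i$: if $G_i\simeq H_{s,t}$ or $H_{s,t}^-$, then $\{c_1\}$ together with one vertex of $S_1$ and one vertex of $S_2$ is a large independent set of weight $|G_i|$; if $G_i\in\mathcal{F}_{12}$, then $\{c_1,c_3,c_5,c_7\}$ is a large independent set of weight $12=|G_i|$; and if $G_i\in\mathcal{F}_{11}$, then $\{c_1,c_3,c_5,c_7\}$ is a large independent set of weight $12=|G_i|+1$. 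In all three cases $\sigma^*(G_i)\le|G_i|+1$, while $\delta(G)\ge 2$ forces each component other than $G_i$ to have at least three vertices, hence $|G_i|\le n-3$; combined with $n\le\sigma^*(G)\le\sigma^*(G_i)$ this gives $n\le(n-3)+1$, a contradiction (for $G_i\in\mathcal{H}_{|G_i|}\cup\mathcal{F}_{12}$ one already gets the sharper $\sigma^*(G_i)\le|G_i|<n$). The genuinely different case is $G_i\simeq F_5$, where $\sigma^*(F_5)=+\infty$ because $\alpha(F_5)=2$ leaves no large independent set inside $F_5$; here I would instead take another component $G_j$ and a vertex $v\in V(G_j)$ of minimum degree, and observe that $\{b_1,b_3,v\}$ is an independent set of $G$ of size $3$ with minimum degree $2$, hence large, of weight $4+d_G(v)\le|G_j|+3$, which contradicts $n\ge|F_5|+|G_j|=|G_j|+5$.

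Once all exceptional alternatives are ruled out, every $G_i$ has a $2$-proper partition $\mathcal{P}_i$ with $|\mathcal{P}_i|\le\alpha^*(G_i)$. Then $\mathcal{P}=\bigcup_{i=1}^{k}\mathcal{P}_i$ is a $2$-proper partition of $G$, and Lemma~\ref{alphacomp} yields
\[
|\mathcal{P}|=\sum_{i=1}^{k}|\mathcal{P}_i|\le\sum_{i=1}^{k}\alpha^*(G_i)\le\alpha^*(G),
\]
contradicting that $G$ is a counterexample; hence $G$ is connected.

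The main obstacle is the handling of the exceptional components. One must keep careful track of the fact that for a component of order $m$ the relevant exceptional list is $\{K_2,F_5\}\cup\mathcal{F}_{11}\cup\mathcal{F}_{12}\cup\mathcal{H}_m$, and $F_5$ in particular cannot be dispatched by a weight bound on an internal independent set, since its $\sigma^*$ is $+\infty$, so it forces a genuinely cross-component construction. The remainder is the routine verification that the explicit independent sets listed above are independent and large and have the claimed weights.
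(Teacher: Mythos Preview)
Your argument is correct and follows the same overall strategy as the paper: assume $G$ is disconnected, rule out each component being exceptional, apply minimality to get $2$-proper partitions of the components, and combine them via Lemma~\ref{alphacomp}. The paper's treatment of the exceptional components is slightly more uniform: for every $G_i\in\mathcal{F}\setminus\{K_2\}$ it exhibits an independent set $I_i\subseteq V(G_i)$ with $|I_i|=\delta_G(I_i)$ and $w_G(I_i)\le|G_i|$ (e.g.\ $\{b_1,b_3\}$ in $F_5$, $\{c_1,c_3,c_5\}$ in $\mathcal{F}_{11}\cup\mathcal{F}_{12}$, $\{c_1,b\}$ in $\mathcal{H}_{|G_i|}$), and then adjoins a single vertex $u$ from another component to obtain a large independent set of $G$ with weight at most $|G_i|+(n-|G_i|-1)=n-1$. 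This single cross-component trick handles all cases at once, so $F_5$ is not special. Your version instead finds a large independent set already inside $G_i$ for the non-$F_5$ families and only resorts to the cross-component construction for $F_5$; this is a valid alternative, and your estimates (weight $\le|G_i|+1$ together with $|G_i|\le n-3$) are correct, but the paper's approach avoids the case split.
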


\begin{proof}
  Suppose that $G$ is disconnected and let $G_1,\dots ,G_k$ be the components of $G$. 
  First we show that $G_i$ is not isomorphic to any exceptional graph for each $i\in [k]$.
  Suppose that $G_i$ is isomorphic to a graph in $\mathcal{F}$ for some $i\in[k]$. 
  As $\delta(G)\geq 2$, $G_i$ is isomorphic to a graph in $\mathcal{F}\setminus\{K_2\}$.
  Then we can take an independent set $I_i$ of $G_i$ such that 
  $|I_i|=\delta_{G_i}(I_i)=\delta_G(I_i)$ and $w_G(I_i)=w_{G_i}(I_i)\leq |G_i|$.
  Let $u\in V(G)\setminus V(G_i)$ and $I=I_i\cup\{u\}$.
  Then $|I|=|I_i|+1=\delta_G(I_i)+1\geq\delta_G(I)+1$,
  and hence $I$ is a large independent set of $G$.
  This forces $\sigma^*(G)\leq w_G(I)=w_G(I_i)+d_G(u)\leq |G_i|+(n-|G_i|-1)=n-1$, a contradiction.
  Hence $G_i$ is not isomorphic to any graph in $\mathcal{F}$ for each $i\in[k]$.
  Since $\sigma^*(G_i)\geq \sigma^*(G)\geq n>|G_i|$ by Lemma \ref{sigmacomp}, 
  $G_i$ has a 2-proper partition $\mathcal{P}_i$ with $|\mathcal{P}_i|\leq\alpha^*(G_i)$.
  Then $\mathcal{P}=\bigcup_{i=1}^{k}\mathcal{P}_i$ is a 2-proper partition of $G$.
  By Lemma \ref{alphacomp}, $\mathcal{P}$ satisfies 
  $|\mathcal{P}|=\sum_{i=1}^{k}|\mathcal{P}_i|\leq \sum_{i=1}^{k}\alpha^*(G_i)\leq \alpha^*(G)$, 
  a contradiction. 
\end{proof}

When $G$ is 2-connected, $\{V(G)\}$ is a desired 2-proper partition.
Hence $G$ has a cut-vertex.
Let $\mathcal{B}$ be the family of blocks of $G$, 
and let $U$ be the set of cut-vertices of $G$.
The \textit{block-cut-vertex graph} $T$ of $G$ is defined by
$V(T)=\mathcal{B}\cup U$ and $E(T)=\{Bu\mid B\in\mathcal{B}, u\in U, u\in V(B)\}$.
By the definition of $T$, the graph $T$ is a tree and every leaf of $T$ belongs to $\mathcal{B}$.
A block $B\in\mathcal{B}$ which corresponds to a leaf of $T$ is an \textit{end-block} of $G$.
Since $\delta(G)\geq 2$, every end-block of $G$ has at least 3 vertices.

\begin{claim}\label{3end}
  Every end-block $B\in\mathcal{B}$ has at least 4 vertices.
\end{claim}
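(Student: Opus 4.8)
The plan is to argue by contradiction: suppose some end-block $B$ has exactly $3$ vertices, so $B \simeq K_3$. Since $B$ is an end-block, it contains a unique cut-vertex $w$ of $G$, and the other two vertices of $B$, say $x$ and $y$, have all their neighbours inside $B$. Because $\delta(G) \ge 2$, this forces $N_G(x) = N_G(y) = \{w, \text{the third vertex}\}$, i.e. $d_G(x) = d_G(y) = 2$ and $x,y$ are adjacent to each other and to $w$. The key observation is that $\{x, y\}$ is \emph{not} independent, so I cannot use $B$ alone; instead I would build a large independent set using degree-$2$ vertices drawn from several end-blocks together with one more vertex, exploiting that $\sigma^*(G) \ge n$ is violated as soon as I find a large independent set of small weight.

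First I would collect information across \emph{all} end-blocks. Let $B_1, \dots, B_m$ be the end-blocks of $G$. By Claim \ref{3end}'s hypothesis being false we are assuming at least one $B_j \simeq K_3$; I would then separately handle the subcase where every such triangle end-block shares its cut-vertex with the rest of $G$ in the obvious way. From each triangle end-block $B_j \simeq K_3$ I get a vertex $z_j$ with $d_G(z_j) = 2$ (either of the two non-cut vertices works), and these vertices lie in pairwise different blocks, hence are pairwise non-adjacent \emph{unless} they happen to share the cut-vertex — but a degree-$2$ vertex in a $K_3$ end-block is adjacent only to the two vertices of that block, so $z_j$ is non-adjacent to $z_{j'}$ for $j \ne j'$. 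Thus $\{z_1, \dots, z_r\}$ (over the triangle end-blocks) is an independent set, every vertex of which has degree exactly $2$, so $\delta_G(I) = 2$ on this set. If $r \ge 3$ then this set is already large, and its weight is $2r$; combined with $\sigma^*(G) \ge n$ this gives $n \le 2r$, which I would contradict by a counting argument on the block-cut-vertex tree (too many end-blocks relative to $n$, since each triangle end-block contributes $2$ private vertices and we would need $r \le (n-1)/2$ with equality forcing a very rigid structure that I can rule out directly, e.g. a star-like tree that makes $G$ too small or makes $\alpha^*$ large enough to partition).

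The remaining case is $r \le 2$, i.e. at most two triangle end-blocks. Here the cut-vertex $w$ of a triangle end-block $B = \{w, x, y\}$ has $x, y \notin U$ with $d_G(x)=d_G(y)=2$; pick any vertex $v \notin V(B)$ (exists since $G$ is connected with a cut-vertex, so $n \ge 4$). Then $I = \{x, v\}$ is independent with $\delta_G(I) \le d_G(x) = 2$, but $|I| = 2$, which is large only if $\delta_G(I) \le 1$ — so this alone does not work, and I need a third vertex. I would take $v$ to be a second low-degree vertex: either from a second triangle end-block ($d_G(v)=2$), giving $I = \{x, x', v'\}$ or similar with three degree-$2$ vertices across $\le 2$ triangle blocks plus one more block — but with only $r \le 2$ triangles I need the third vertex to come from elsewhere, and the natural source is a non-end-block structure or a larger end-block. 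This is the step I expect to be the main obstacle: when there are very few triangle end-blocks, I must either find an additional independent low-degree vertex (perhaps $w$ itself, if $d_G(w)$ is small — e.g. if $w$ lies in only two blocks both small) to form a large independent set of weight $< n$, or else show directly that $G$ is one of the listed exceptional graphs or admits the desired $2$-proper partition (note $G[V(B) \setminus \{x\}]$ or collapsing $\{x,y\}$ may let us absorb the triangle into a neighbouring block). I anticipate the exceptional families $F_5, \mathcal{F}_{11}, \mathcal{F}_{12}, \mathcal{H}_n$ arise precisely from the tightly-constrained configurations left over here, so the write-up will amount to: reduce to a bounded-size local picture around the cut-vertex $w$ of a triangle end-block, enumerate the possibilities for $d_G(w)$ and the other blocks at $w$, and in each surviving case either exhibit a large independent set with weight at most $n-1$ (contradicting $\sigma^*(G)\ge n$) or recognize $G$ as an exceptional graph (contradicting that $G$ is a counterexample).
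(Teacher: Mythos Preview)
Your plan has a genuine gap in the case $r\le 2$, and the detour through counting triangle end-blocks is what creates it. You seem to be trying to assemble an independent set consisting entirely of degree-$2$ vertices, which forces you to hunt for several triangle end-blocks; when there are few, you are stuck and fall back on a vague case analysis. But this misreads the definition of \emph{large}: once a single degree-$2$ vertex $u_1$ lies in an independent set $I$, you have $\delta_G(I)\le 2$, so any independent $I$ of size $3$ containing $u_1$ is already large. The other two vertices need not have small degree; what you must control is their \emph{sum} of degrees, via $\sigma^*(G)\ge n$.

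The paper's argument exploits this directly. Fix one triangle end-block $B=\{u_1,u_2,u_3\}$ with $u_3$ the cut-vertex. First dispose of the case where $G-V(B)$ is $2$-connected: then $\{V(B),V(G)\setminus V(B)\}$ is a $2$-proper partition of size $2\le\alpha^*(G)$ (Lemma~\ref{alphacut}). Otherwise $G-V(B)$ is disconnected or has a cut-vertex $x$; in either case pick $v_1,v_2$ in different components (of $G-V(B)$ or of $G-V(B)-x$), so that $v_1v_2\notin E(G)$ and $|N_{G-V(B)}[v_1]\cap N_{G-V(B)}[v_2]|\le 1$. Now $I=\{u_1,v_1,v_2\}$ is large, so $d_G(v_1)+d_G(v_2)\ge n-2$; a neighbourhood count in $G-\{u_1,u_2\}$ then forces equality everywhere, which pins down $G$ as some $H_{s,t}$ or $H_{s,t}^-$ (this is exactly where $\mathcal{H}_n$ enters, and only $\mathcal{H}_n$: the families $F_5,\mathcal{F}_{11},\mathcal{F}_{12}$ arise elsewhere, not in this claim). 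Your proposed enumeration of local configurations around $w$ is both unnecessary and unlikely to terminate cleanly; the right move is the $2$-connectedness dichotomy on $G-V(B)$.
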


\begin{proof}
  Suppose that $B$ is an end-block of $G$ and $|B|=3$.
  As $\delta(G)\geq 2$, we have $B\simeq K_3$. 
  Write $V(B)=\{u_1, u_2, u_3\}$ where $u_3$ is a cut-vertex of $G$.
  If $G-V(B)$ is 2-connected, 
  then $\mathcal{P}=\{V(B), V(G)\setminus V(B)\}$ is a 2-proper partition of $G$ and 
  $|\mathcal{P}|=2\leq \alpha^*(G)$ by Lemma \ref{alphacut}, a contradiction.
  Hence $G-V(B)$ is not 2-connected.
  Since $G\not\simeq F_5$, we have $G-V(B)\not\simeq K_2$.
  Now we define two subsets $S_1$ and $S_2$ of $V(G)\backslash V(B)$ as follows.
  If $G-V(B)$ is disconnected, let $S_1$ be the vertex set of a component of $G-V(B)$ and let $S_2=V(G)\setminus(V(B)\cup S_1)$;
  if $G-V(B)$ is connected, let $x$ be a cut-vertex of $G-V(B)$, 
  and let $S_1, S_2\subseteq V(G)\setminus V(B)$ such that $G[S_1]$ and $G[S_2]$ are distinct components of $G-(V(B)\cup\{x\})$.
  Let $v_i\in S_i$ for $i\in[2]$, and define $I=\{u_1, v_1, v_2\}$.
  Then $|I|=3=d_G(u_1)+1\geq\delta_G(I)+1$, and hence $I$ is a large independent set of $G$.
  It follows that $w_G(I)\geq\sigma^*(G)\geq n$, and so $d_G(v_1)+d_G(v_2)=w_G(I)-2\geq n-2$.
  Note that $|N_{G-V(B)}[v_1]\cap N_{G-V(B)}[v_2]|\leq 1$, 
  and the equality holds if and only if $G-V(B)$ is connected and $v_1x, v_2x\in E(G)$.
  Furthermore, for each $i\in[2]$, $|N_{G-V(B)}[v_i]|\geq d_G(v_i)$ and the equality holds if and only if $v_iu_3\in E(G)$.
  Hence
  \begin{align*}
    n-2=|G-\{u_1, u_2\}|&\geq |N_{G-V(B)}[v_1]\cup N_{G-V(B)}[v_2]\cup\{u_3\}|\\
    &=|N_{G-V(B)}[v_1]|+|N_{G-V(B)}[v_2]|-|N_{G-V(B)}[v_1]\cap N_{G-V(B)}[v_2]|+1\\
    &\geq d_G(v_1)+d_G(v_2).
  \end{align*}
  This forces $d_G(v_1)+d_G(v_2)=n-2$, and all equalities hold in the above inequalities.
  As we mentioned above, $G-V(B)$ is connected, $v_1x, v_1u_3, v_2x, v_2u_3\in E(G)$.
  Since $|G-\{u_1,u_2\}|=|N_{G-V(B)}[v_1]\cup N_{G-V(B)}[v_2]\cup\{u_3\}|$,
  we have $V(G)=V(B)\cup S_1\cup S_2\cup \{x\}$ and $S_i\subseteq N_G[v_i]$ for each $i\in [2]$.
  Since $v_i\in S_i$ is arbitrary, we conclude that both $S_1\cup\{x\}$ and $S_2\cup\{x\}$ are cliques of $G$ and $S_1\cup S_2\subseteq N_G(u_3)$.
  By the symmetry of $S_1$ and $S_2$, we may assume that $|S_1|\leq |S_2|$.
  Let $s=|S_1|+1$ and $t=|S_2|+1$.
  Then $G$ is isomorphic to either $H_{s,t}$ or $H_{s,t}^-$ according to $u_3x\in E(G)$ or not.
  In particular, $G$ is isomorphic to a graph in $\mathcal{H}_n$, which is a contradiction.
\end{proof}

For each $B\in\mathcal{B}$, let $X_B=V(B)\setminus U$.

\begin{claim}\label{exception}
  Let $B\in\mathcal{B}$ with $X_B\neq\emptyset$.
  If there is a vertex $u\in V(B)\setminus X_B$ for which no block of $B-u$ contains $X_B$, then $G-V(B)$ is isomorphic to a graph in $\mathcal{F}\setminus\{K_2\}$.
\end{claim}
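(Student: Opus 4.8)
The plan is to argue by contradiction with the minimality of $G$: assuming $G-V(B)$ is \emph{not} isomorphic to any graph in $\mathcal{F}\setminus\{K_2\}$, I will produce either a large light independent set of $G$, contradicting $\sigma^*(G)\ge n$, or a $2$-proper partition of $G$ with at most $\alpha^*(G)$ parts.

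First I would fix the local structure. Since $X_B\neq\emptyset$, the block $B$ is not a cut-edge, and by Claim~\ref{3end} it is not an end-block of order at most $3$; hence $B=G[V(B)]$ is $2$-connected with $|V(B)|\ge 3$, so $B-u$ is connected. The hypothesis that no block of $B-u$ contains $X_B$ forces $B-u$ to have a cut-vertex (otherwise $B-u$ is a single $2$-connected block containing all of $V(B)\setminus\{u\}\supseteq X_B$), so its block--cut-vertex tree has at least two leaf blocks $D_1,D_2$. The key observation is that if $D$ is a leaf block of $B-u$ with unique cut-vertex $w_D$, then every $v\in V(D)\setminus\{w_D\}$ has $N_G(v)\subseteq V(D)\cup\{u\}\subseteq V(B)$, so $v$ lies in no block of $G$ other than $B$; that is, $V(D)\setminus\{w_D\}\subseteq X_B$ and $d_G(v)=d_B(v)$. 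Choosing $v_i\in V(D_i)\setminus\{w_{D_i}\}$ for $i\in[2]$, I obtain $v_1,v_2\in X_B$ that are non-adjacent in $G$, satisfy $2\le d_G(v_i)\le|V(D_i)|$, see nothing outside $V(B)$, and have $d_G(v_i)=2$ whenever $D_i\simeq K_2$. Since $u$ is a cut-vertex, $W:=V(G)\setminus V(B)\neq\emptyset$, and for every independent set $I'$ of $G-V(B)$ the set $\{v_1,v_2\}\cup I'$ is independent in $G$; combined with the degree bounds on $v_1,v_2$ this will make many such sets large.

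Next I would run minimality on the components $C_1,\dots,C_m$ of $G-V(B)$. For each $C_j$, a large light independent set of $C_j$ enlarged by $v_1$ and $v_2$ is a large independent set of $G$, and bounding its $G$-weight through $d_G(v_1)+d_G(v_2)\le|V(B)|-1$ and $\sum_j|V(C_j)|=n-|V(B)|$, and comparing with $\sigma^*(G)\ge n$, should yield $\sigma^*(C_j)\ge|V(C_j)|$. If moreover no $C_j$ is exceptional, minimality gives $2$-proper partitions $\mathcal{P}_j$ of $C_j$ with $|\mathcal{P}_j|\le\alpha^*(C_j)$, and then $\{V(B)\}\cup\bigcup_j\mathcal{P}_j$ is a $2$-proper partition of $G$ whose number of parts is bounded by $\alpha^*(G)$ via Lemma~\ref{alphacomp} together with a light independent set of the form $\{v_1\}\cup\bigcup_j I'_j$ (or $\{v_1,v_2\}\cup\bigcup_j I'_j$), contradicting the choice of $G$. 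The surviving possibility is that some $C_j$ is exceptional, i.e. lies in $\{K_2,F_5\}\cup\mathcal{F}_{11}\cup\mathcal{F}_{12}\cup\mathcal{H}_{|C_j|}$; rerunning the degree count in these tight cases, and using that $G$ itself is not exceptional, should force $m=1$ and identify $G-V(B)$ as exactly one of $F_5$, a member of $\mathcal{F}_{11}$, a member of $\mathcal{F}_{12}$, or a member of $\mathcal{H}_{|G-V(B)|}$, while excluding $K_2$ (e.g. a pendant $K_2$ on $B$ would enlarge $B$ to a bigger block).

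I expect the genuine difficulty to be exactly this final step: transferring ``$\sigma^*\ge$ order'' from $G$ to the pieces, and then, in the equality-tight configurations, pinning down the exceptional families precisely. This will require splitting according to the number of leaf blocks of $B-u$, whether each is a $K_2$, how $u$ attaches to $G-V(B)$, and how many cut-vertices of $G$ lie in $B$; the sporadic orders $5$, $11$, $12$ of the exceptional graphs arise only from these tight counts. The local block analysis of $B-u$ in the first part is routine; the case bookkeeping for the exceptions is the crux.
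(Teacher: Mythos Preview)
Your overall strategy matches the paper's: set $G'=G-V(B)$, show $\sigma^*(G')\ge|G'|$, apply minimality to get a $2$-proper partition $\mathcal{P}'$ of $G'$ with $|\mathcal{P}'|\le\alpha^*(G')$, and then show $\alpha^*(G)\ge\alpha^*(G')+1$ so that $\mathcal{P}'\cup\{V(B)\}$ finishes. However, your degree-counting in both of the key steps has a genuine gap.

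When you enlarge a large independent set $I'=\{u_1,\dots,u_t\}$ of $G'$ (or of a component $C_j$) by $v_1,v_2$ and use $w_G(I'\cup\{v_1,v_2\})\ge n$, you obtain only $w_G(I')\ge n-|B|$. To conclude $\sigma^*(G')\ge|G'|$ you need $w_{G'}(I')\ge|G'|$, and the passage from $w_G(I')$ to $w_{G'}(I')$ costs up to $|I'|$, since each $u_i$ may be adjacent to the cut-vertex of $B$ through which its component attaches. You have no bound on $|I'|$ in terms of $|B|$, so the inequality does not close. The same defect recurs in your $\alpha^*$ step: adding $v_1$ to a maximum light independent set $J'$ of $G'$ gives $w_G(J'\cup\{v_1\})\le d_G(v_1)+w_{G'}(J')+|J'|$, and again you have no control over $|J'|=\alpha^*(G')$.

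The paper closes this with one additional idea. From the two non-adjacent vertices $x_1,x_2\in X_B$ whose closed neighbourhoods in $B-u$ meet in at most one vertex, one gets the sharper bound $|B|\ge d_G(x_1)+d_G(x_2)\ge 2d_G(x_1)$, where $d_G(x_1)\le d_G(x_2)$. Then, instead of using all of $I'$ together with two vertices of $X_B$, the paper adjoins only $x_1$ to the first $l=\min\{d_G(u_1),d_G(x_1)\}$ vertices of $I'$; this set is still large in $G$, and now the degree-loss from $G$ to $G'$ is at most $l\le d_G(x_1)$, so the total deficit is $d_G(x_1)+l\le 2d_G(x_1)\le|B|$, exactly what is needed. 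The $\alpha^*$ step is handled the same way: one first proves $\alpha^*(G')\le d_G(x_1)-1$ (otherwise a size-$(d_G(x_1)+1)$ subset of $J'\cup\{x_1\}$ is large in $G$ with weight $\le n-1$, contradicting $\sigma^*(G)\ge n$), and then $J'\cup\{x_1\}$ is light in $G$ because $w_G(J'\cup\{x_1\})\le d_G(x_1)+(|G'|-1)+(d_G(x_1)-1)\le n-2$. Finally, the ``case bookkeeping for the exceptions'' you anticipate is unnecessary here: once $G'\in\mathcal{F}$, Claim~\ref{3end} rules out $G'\simeq K_2$ in one line, and no further analysis of $F_5$, $\mathcal{F}_{11}$, $\mathcal{F}_{12}$, or $\mathcal{H}_n$ is required in this claim.
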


\begin{proof}
  Suppose that $X_B\neq\emptyset$ and no block of $B-u$ contains $X_B$ for some $u\in V(B)\setminus X_B$.
  Then, there are distinct vertices $x_1, x_2\in X_B$ such that $x_1x_2\notin E(G)$ and $|N_{B-u}[x_1]\cap N_{B-u}[x_2]|\leq 1$.
  Without loss of generality, we assume that $d_G(x_1)\leq d_G(x_2)$. 
  Considering these two vertices, we have
  \begin{align}
    |B|\geq |N_{B-u}[x_1]\cup N_{B-u}[x_2]\cup\{u\}|
    &\geq |N_{B-u}[x_1]|+|N_{B-u}[x_2]|-1+1 \notag \\
    &\geq d_G(x_1)+d_G(x_2)\geq 2d_G(x_1).\label{eq:bsize}
  \end{align}
  Let $G'=G-V(B)$.
  Then $d_{G'}(v)\geq d_G(v)-1$ for every $v\in V(G')$.
  We shall show that $\sigma^*(G')\geq |G'|$.
  Let $I'=\{u_1,u_2,\dots ,u_t\}$ be any large independent set of $G'$.
  We may assume that $\delta_{G'}(I')=d_{G'}(u_1)$.
  Let $l=\min\{d_G(u_1), d_G(x_1)\}$.
  Since $t=|I'|\geq d_{G'}(u_1)+1\geq d_G(u_1)\geq l$, 
  the set $I=\{u_1, u_2, \dots , u_l, x_1\}$ is well-defined.
  Since $|I|=l+1=\min\{d_G(u_1), d_G(x_1)\}+1\geq\delta_G(I)+1$, 
  $I$ is a large independent set of $G$, and hence $w_G(I)\geq n$.
  Consequently, it follows from (\ref{eq:bsize}) that
  \begin{align*}
    w_{G'}(I')=\sum_{i=1}^{t}d_{G'}(u_i)\geq\sum_{i=1}^{l}d_{G'}(u_i)
    \geq \sum_{i=1}^{l}(d_G(u_i)-1)
    &\geq w_G(I)-l-d_G(x_1)\\
    &\geq w_G(I)-2d_G(x_1)\\
    &\geq n-|B|
    =|G'|,
  \end{align*}
  and we conclude that $\sigma^*(G')\geq |G'|$.

  Suppose that $G'$ is isomorphic to no graph in $\mathcal{F}$.
  Then $G'$ has a 2-proper partition $\mathcal{P}'$ with $|\mathcal{P}'|\leq \alpha^*(G')$.
  Since $B$ is 2-connected, $\mathcal{P}=\mathcal{P}'\cup\{V(B)\}$ is a 2-proper partition of $G$.
  We shall show that $|\mathcal{P}|\leq \alpha^*(G)$.
  Let $J'=\{v_1, v_2, \dots , v_r\}$ be a light independent set of $G'$ with $r=\alpha^*(G')$.
  If $r\geq d_G(x_1)$, then $J_{x_1}:=\{x_1, v_1, v_2, \dots , v_{d_G(x_1)}\}$ 
  is a large independent set of $G$.
  Then by (\ref{eq:bsize}),
  \begin{align*}
    \sigma^*(G)\leq w_G(J_{x_1})=d_G(x_1)+\sum_{i=1}^{d_G(x_1)}d_G(v_i)
    &\leq d_G(x_1)+\sum_{i=1}^{d_G(x_1)}(d_{G'}(v_i)+1)\\
    &\leq d_G(x_1)+w_{G'}(J')+d_G(x_1)\\
    &\leq |G'|-1+2d_G(x_1)
    \leq n-1,
  \end{align*}
  which contradicts the assumption of the theorem.
  Thus $r\leq d_G(x_1)-1$.
  Let $J=J'\cup\{x_1\}$. 
  Again by (\ref{eq:bsize}),
  \begin{align*}
    w_G(J)=d_G(x_1)+\sum_{i=1}^{r}d_G(v_i)
    \leq d_G(x_1)+\sum_{i=1}^{r}(d_{G'}(v_i)+1)
    &=d_G(x_1)+w_{G'}(J')+r\\
    &\leq |G'|-1+2d_G(x_1)-1
    <n-1,
  \end{align*}
  and in particular, $J$ is a light independent set of $G$.
  Thus $\alpha^*(G)\geq |J|=r+1=\alpha^*(G')+1$ and hence
  $|\mathcal{P}|=|\mathcal{P}'|+1\leq \alpha^*(G')+1\leq \alpha^*(G)$, 
  which contradicts the fact that $G$ is a counterexample of the theorem.
  Therefore, $G'$ is isomorphic to a graph in $\mathcal{F}$.
  Moreover, Claim \ref{3end} implies that $G'\not\simeq K_2$, 
  and thus $G'$ is isomorphic to a graph in $\mathcal{F}\setminus\{K_2\}$.
\end{proof}

\begin{claim}\label{xblock}
  For every $B\in\mathcal{B}$ and every $u\in V(B)\setminus X_B$, $X_B$ is contained in a block of $B-u$.
\end{claim}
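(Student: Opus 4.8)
The plan is to argue by contradiction. Suppose some block $B$ with $X_B\neq\emptyset$ and some $u\in V(B)\setminus X_B$ violate the statement; then Claim \ref{exception} applies and gives $G':=G-V(B)\in\mathcal{F}\setminus\{K_2\}$. The first task is to pin down how $G'$ sits inside $G$. Every graph in $\mathcal{F}\setminus\{K_2\}$ is connected, so $G'$ is a single component of $G-V(B)$; it is joined to $B$ through exactly one cut-vertex $u^*$ of $G$, and this forces $V(B)\setminus X_B=\{u^*\}$, hence $u=u^*$ and $X_B=V(B-u^*)$. Since the statement fails at $u^*$, the graph $B-u^*$ is connected but not $2$-connected, so $|B|\geq 4$. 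Writing $Y:=N_G(u^*)\cap V(G')\neq\emptyset$, we have $d_G(v)=d_{G'}(v)$ for $v\in V(G')\setminus Y$, $d_G(v)=d_{G'}(v)+1$ for $v\in Y$, and $d_G(x)=d_B(x)\leq |B|-1$ for $x\in X_B$. I would also record two observations: (i) if $z_0$ is a cut-vertex of $B-u^*$ and $w_1,w_2$ lie in two distinct components $D_1,D_2$ of $(B-u^*)-z_0$, then $w_1w_2\notin E(G)$ and $d_G(w_i)\leq |D_i|+1$, so $d_G(w_1)+d_G(w_2)\leq |B|$; and (ii) if an end-block of $G'$ remains a block of $G$, then it is an end-block of $G$ and so has order $\geq 4$ by Claim \ref{3end}.

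Using this, I would first rule out $G'\simeq F_5$ and $G'$ of the form $H_{s,t}$ or $H_{s,t}^-$. Each of these has a triangle end-block through the cut-vertex $a'$; by (ii), in $G$ this triangle must be merged into a larger block, which by the ``joins only at $u^*$'' structure can happen only when $Y$ meets every component of $G'-a'$. A short inspection of those few possibilities for $Y$ then yields, in every case, either a triangle end-block of order $3$ in $G$ (contradicting Claim \ref{3end}) or a concrete small independent set $I$ of $G$ — two degree-$2$ vertices of $G'$ avoiding $Y$, or one such vertex together with the pair $w_1,w_2$ from (i), plus where needed a vertex of $X_B$ — with $\delta_G(I)=2$, hence large, and $w_G(I)<n$; this contradicts $\sigma^*(G)\geq n$. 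Thus $G'\in\mathcal{F}_{11}\cup\mathcal{F}_{12}$, one of fifteen explicit graphs, each with $\delta(G')=3$ and every block of order $\geq 6$.

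For these remaining cases the argument becomes a finite, if fiddly, check. Each such $G'$ has several independent sets of degree-$3$ vertices spread over both of its blocks; combining such a set with the pair $w_1,w_2$ of (i) and keeping track of which of the chosen $G'$-vertices lie in $Y$, one bounds $|B|$ from above and forces $Y$ to meet both blocks of $G'$, so that $G[V(G')\cup\{u^*\}]$ is $2$-connected; this narrows things to boundedly many graphs $G$. For each, I would then either produce a $2$-proper partition of $G$ with at most $\alpha^*(G)$ parts — taking $V(G')\cup\{u^*\}$ as one $2$-connected part and a short $2$-proper partition of $B-u^*$ along its block decomposition, the part count bounded via Lemma \ref{alphacut} together with the light independent sets exhibited en route — or, when no partition is short enough, exhibit a small large independent set of $G$ (an independent set of low-degree vertices of $B-u^*$ arising from its block decomposition, together with a degree-$3$ vertex of $G'$) witnessing $\sigma^*(G)<n$. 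Either outcome contradicts the minimality of $G$, which finishes the proof.

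I expect the last step to be the main obstacle: the analysis branches on the intersection of $Y$ with the named low-degree vertices of the fifteen graphs $G'$, crossed with the handful of possible shapes of $B$ and of $B-u^*$. The individual degree inequalities and partitions are routine, but one must be disciplined about checking, before invoking $\sigma^*(G)\geq n$, that the independent set in hand really satisfies $|I|\geq\delta_G(I)+1$, and about bookkeeping the extra unit that $u^*$ contributes to the degrees of the vertices in $Y$.
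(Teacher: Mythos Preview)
Your setup is right: $G'=G-V(B)$ is connected, hence $B$ is an end-block, $u=u^*$ is the unique cut-vertex in $B$, and $B-u^*$ is not $2$-connected. But your plan to eliminate $G'\simeq F_5$ and $G'\in\mathcal{H}_l$ directly has a hole. Your argument hinges on finding a vertex of $G'$ of degree $2$ that avoids $Y$, so as to build a large independent set $I$ with $\delta_G(I)=2$. This is not always possible: for $G'\simeq F_5$ nothing prevents $\{b_1,b_2,b_3,b_4\}\subseteq Y$, and for $G'\simeq H_{s,t}$ or $H_{s,t}^-$ nothing prevents $\{c_1,c_2\}\subseteq Y$; in both situations every vertex of $G'$ has degree at least $3$ in $G$, so neither of your two stated recipes for $I$ applies. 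These cases are not pathological edge cases you can brush aside --- they are precisely the configurations that survive (the paper's Subclaim~\ref{sc1} in fact \emph{forces} them), and in them $B-u^*$ is an arbitrary connected non-$2$-connected graph of unbounded size. Your later sketch for $G'\in\mathcal{F}_{11}\cup\mathcal{F}_{12}$ has the same flavour of difficulty: you promise to bound $|B|$ and enumerate, but $B-u^*$ need not admit a $2$-proper partition at all, and the independent-set estimates you write down do not control its structure tightly enough to carry out the enumeration.

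The idea you are missing is a second application of Claim~\ref{exception}. Once you have shown (as you do, in effect) that $B_2:=G[V(G')\cup\{u^*\}]$ is a block of $G$, observe that $B_2$ is an end-block, $u^*\in V(B_2)\setminus X_{B_2}$, and $B_2-u^*=G'$ is not $2$-connected; so the hypothesis of Claim~\ref{exception} holds for $B_2$ as well, giving $G-V(B_2)=B-u^*\in\mathcal{F}\setminus\{K_2\}$. Now \emph{both} $B-u^*$ and $G'$ lie in the finite list $\mathcal{F}\setminus\{K_2\}$, the roles of the two sides are symmetric, and the remaining case analysis (the paper's Subclaims~\ref{sc4}, \ref{sc5} and the final weight count) is genuinely finite and short. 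Without this symmetry step you are trying to analyse one bounded side against an unbounded one, and that is where your argument stalls.
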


\begin{proof}
  Let $B_1\in\mathcal{B}$ and $u\in V(B_1)\setminus X_{B_1}$.
  Suppose that no block of $B_1-u$ contains $X_{B_1}$, and let $B_1^-=B_1-u$.
  By Claim \ref{exception}, $G-V(B_1)$ is isomorphic to a graph in $\mathcal{F}\setminus\{K_2\}$.
  Since every graph in $\mathcal{F}$ is connected, $B_1$ is an end-block of $G$, 
  and so $X_{B_1}=V(B_1^-)$.
  This together with the assumption of the proof implies that $B_1^-$ is not 2-connected.
  Now we prove the following subclaims.
  \begin{subclaim}\label{sc1}
    If $G-V(B_1)$ is isomorphic to a graph in $\{F_5, H_{2,2}, H_{2,2}^-\}$, 
    then $u$ is adjacent to all vertices of $G-V(B_1)$ having degree 2.
  \end{subclaim}
  \begin{proof}
    We may assume that $G-V(B_1)$ belongs to $\mathcal{F}\setminus\{K_2\}$.
    (For example, if $G-V(B_1)$ is isomorphic to a graph in $\mathcal{F}_{11}$, 
    then we may assume that $V(G-V(B_1))=\{a_1,b_2,b_3,c_i\mid i\in[8]\}$.)
    Since $B_1^-$ is not 2-connected, there is a vertex $v\in V(B_1^-)$ such that $d_G(v)\leq |B_1|-2$.
    
    Suppose that $G-V(B_1)\in\{F_5, H_{2,2}, H_{2,2}^-\}$.
    Then $|G-V(B_1)|\geq 5$.
    Suppose that there exists a vertex $x\in V(G)\setminus V(B_1)$ such that $d_{G-V(B_1)}(x)=2$ and $xu\notin E(G)$.
    Then we can easily verify that there exists a vertex $y\in V(G)\setminus (V(B_1)\cup\{x\})$ such that 
    $d_{G-V(B_1)}(y)=2$ and $xy\notin E(G)$.
    Let $I=\{x,y,v\}$.
    Then $I$ is a large independent set of $G$, and hence 
    $\sigma^*(G)\leq w_G(I)=d_G(x)+d_G(y)+d_G(v)\leq 2+3+(|B_1|-2)\leq
    |G-V(B_1)|+|B_1|-2=n-2$, which is a contradiction.
    Thus $u$ is adjacent to all vertices of $G-V(B_1)$ having degree 2.
  \end{proof}

  \begin{subclaim}\label{sc2}
    The graph $G-V(B_1^-)$ is a block of $G$.
  \end{subclaim}
  \begin{proof}
    We assume that $G-V(B_1)$ belongs to $\mathcal{F}\setminus\{K_2\}$, 
    and let $v$ be a vertex of $B_1^-$ such that $d_G(v)\leq |B_1|-2$.
    If $G-V(B_1)=F_5$, then it follows from Subclaim \ref{sc1} that $G-V(B_1^-)$ is a block of $G$.
    Thus we may assume that $G-V(B_1)\in\mathcal{F}\setminus\{K_2, F_5\}$.
    \begin{flushleft}
      \textbf{Case 1:} $G-V(B_1)\in\mathcal{F}_{11}\cup\mathcal{F}_{12}$.
    \end{flushleft}
    Suppose that $c_iu\notin E(G)$ for some $i\in [8]$.
    Then there exist two distinct indices $j$ and $k$ with $j, k\in [8]\setminus\{i\}$ such that 
    $\{c_i, c_j, c_k\}$ is an independent set of $G$
    (for example, if $i=1$, then $j=3$, $k=5$ are desired indices).
    Let $I=\{c_i, c_j, c_k, v\}$.
    Since $|I|=4=d_G(c_i)+1\geq \delta_G(I)+1$, 
    $I$ is a large independent set of $G$, and hence
    $\sigma^*(G)\leq w_G(I)=d_G(c_i)+d_G(c_j)+d_G(c_k)+d_G(v)\leq 3+4+4+(|B_1|-2)
    \leq |G-V(B_1)|+|B_1|-2=n-2$, which is a contradiction.
    Thus $\{c_i\mid i\in[8]\}\subseteq N_G(u)$, which implies that $G-V(B_1^-)$ is a block of $G$.
    \begin{flushleft}
      \textbf{Case 2:} $G-V(B_1)\in\mathcal{H}_l$, where $l=n-|B_1|$.
    \end{flushleft}
    Let $s$ and $t$ be integers such that $2\leq s\leq t$, $l=s+t+2$ and $G-V(B_1)\in\{H_{s,t}, H_{s,t}^-\}$.
    By the definition of $H_{s,t}$ and $H_{s,t}^-$, 
    $V(G)\setminus V(B_1)=V(S_1)\cup V(S_2)\cup\{a,b,c_1,c_2\}$.
    Suppose that $c_iu\notin E(G)$ for some $i\in[2]$.
    Let $I=\{c_i, b, v\}$.
    Since $|I|=3=d_G(c_i)+1\geq\delta_G(I)+1$, $I$ is a large independent set of $G$, 
    and hence $\sigma^*(G)\leq w_G(I)\leq 2+(s+t)+(|B_1|-2)=
    |G-V(B_1)|+|B_1|-2=n-2$, which is a contradiction.
    Thus $\{c_1, c_2\}\subseteq N_G(u)$.

    Suppose that $(V(S_1)\cup V(S_2))\cap N_G(u)=\emptyset$.
    Let $I'=\{c_1,x_1,x_2,v\}$, where $x_j$ is a vertex in $V(S_j)$ for each $j\in[2]$.
    Since $|I'|=4=d_G(c_1)+1\geq\delta_G(I')+1$, $I'$ is a large independent set of $G$, and hence
    $\sigma^*(G)\leq w_G(I')=d_G(c_1)+d_G(x_1)+d_G(x_2)+d_G(v)\leq 3+s+t+(|B_1|-2)
    =(|G-V(B_1)|+1)+|B_1|-2=n-1$, which is a contradiction.
    Thus $(V(S_1)\cup V(S_2))\cap N_G(u)\neq\emptyset$, 
    which implies that $G-V(B_1^-)$ is a block of $G$.
  \end{proof}

  Let $B_2=G-V(B_1^-)\in\mathcal{B}$ and $B_2^-=B_2-u$ ($=G-V(B_1)$).
  Since $B_2^-$ is isomorphic to a graph in $\mathcal{F}\setminus\{K_2\}$, 
  $B_2^-$ is not 2-connected.
  It follows from Subclaim \ref{sc2} that $B_2$ is an end-block of $G$,
  and hence, $X_{B_2}=V(B_2)\setminus\{u\}$.
  This implies that no block of $B_2-u$ contains $X_{B_2}$.
  Applying Claim \ref{exception} with $B=B_2$, we have $G-V(B_2)$ ($=B_1^-$) is isomorphic to a graph in $\mathcal{F}\setminus\{K_2\}$.
  In particular, the roles of $B_1$ and $B_2$ are symmetric.
  Furthermore, by exchanging the role of $B_1$ and $B_2$, the following subclaim holds.

  \begin{subclaim}\label{sc3}
    If $G-V(B_2)$ ($=B_1^-$) is isomorphic to a graph in $\{F_5, H_{2,2}, H_{2,2}^-\}$,
    then $u$ is adjacent to all vertices of $G-V(B_2)$ having degree 2.
  \end{subclaim}

  We may assume that $|B_1^-|\leq |B_2^-|$.

  \begin{subclaim}\label{sc4}
    If $B_1^-$ is isomorphic to $F_5$, then $B_2^-$ is isomorphic to a graph in $\mathcal{F}\setminus\{K_2, F_5, H_{2,2}, H_{2,2}^-\}$.
  \end{subclaim}
  \begin{proof}
    If $B_1^-$ is isomorphic to $F_5$ and $B_2^-$ is isomorphic to a graph in $\{F_5, H_{2,2}, H_{2,2}^-\}$,
    then by Subclaims \ref{sc1} and \ref{sc3}, $G$ is isomorphic to a graph in $\mathcal{F}_{11}\cup\mathcal{F}_{12}$,
    which is a contradiction.
  \end{proof}

  \begin{subclaim}\label{sc5}
    If $B_1^-$ is isomorphic to a graph in $\{H_{2,2}, H_{2,2}^-\}$,
    then $B_2^-$ is isomorphic to a graph in $\mathcal{F}\setminus\{K_2, F_5, H_{2,2}, H_{2,2}^-\}$.
  \end{subclaim}
  \begin{proof}
    Assume that $B_1^-$ is isomorphic to a graph in $\{H_{2,2}, H_{2,2}^-\}$.
    Since $6=|B_1^-|\leq |B_2^-|$, $B_2^-\not\simeq F_5$.
    Suppose that $B_2^-$ is isomorphic to a graph in $\{H_{2,2}, H_{2,2}^-\}$.
    Then for $i\in[2]$, there exist two non-adjacent vertices $x_1^{(i)}$ and $x_2^{(i)}$ of $B_i^-$
    with $d_{B_i^-}(x_1^{(i)})=d_{B_i^-}(x_2^{(i)})=2$.
    Let $I=\{x_j^{(i)}\mid i\in[2], j\in[2]\}$.
    Note that $d_G(x_j^{(i)})\leq d_{B_i^-}(x_j^{(i)})+1=3$.
    Then $I$ is a large independent set of $G$,
    and hence $\sigma^*(G)\leq w_G(I)=\sum_{i\in[2], j\in[2]}d_G(x_j^{(i)})\leq 4\cdot 3<13=n$, 
    which is a contradiction.
  \end{proof}

  Since $|B_1^-|\leq |B_2^-|$, it follows from Subclaims \ref{sc4} and \ref{sc5} that
  $B_2^-$ is isomorphic to a graph in $\mathcal{F}\setminus\{K_2, F_5, H_{2,2}, H_{2,2}^-\}$.
  Now we define an independent set $I_2$ of $B_2^-$ as follows.
  If $B_2^-$ is isomorphic to a graph in $\mathcal{F}_{11}\cup\mathcal{F}_{12}$, 
  let $I_2$ be the set of vertices of $B_2^-$ corresponding to $c_1$ and $c_3$;
  if $B_2^-$ is isomorphic to a graph in $\{H_{s,t}, H_{s,t}^-\}$ for some $s$ and $t$ with $2\leq s\leq t$,
  let $I_2$ be the set of vertices of $B_2^-$ corresponding to $c_1$ and a vertex in $V(S_1)$.
  In the former case, we have $w_{B_2^-}(I_2)=6<|B_2^-|-3$;
  in the latter case, since $t\geq 3$, $w_{B_2^-}(I_2)=s+2\leq s+t-1=|B_2^-|-3$.
  In either case, we obtain $w_{B_2^-}(I_2)\leq |B_2^-|-3$.

  Next we define the independent set $I_1$ of $B_1^-$ as follows.
  If $B_1^-\simeq F_5$, let $I_1$ be the set of vertices corresponding to $b_1$ and $b_3$;
  if $B_1^-$ is isomorphic to a graph in $\mathcal{F}_{11}\cup\mathcal{F}_{12}$, let $I_1$ be the vertices corresponding to $c_1$, $c_3$ and $c_5$;
  if $B_1^-$ is isomorphic to a graph in $\{H_{s,t}, H_{s,t}^-\}$ for some $s$ and $t$ with $2\leq s\leq t$,
  let $I_1$ be the vertices corresponding to $b$ and $c_1$.
  Then we have $|I_1|=\delta_{B_1^-}(I_1)$ and $w_{B_1^-}(I_1)\leq |B_1^-|-|I_1|+1$.

  Let $\tilde{I}=I_1\cup I_2$.
  Since $|\tilde{I}|=|I_1|+2\geq (\delta_{B_1^-}(I_1)+1)+1\geq \delta_{G}(I_1)+1\geq \delta_G(\tilde{I})+1$, 
  $\tilde{I}$ is a large independent set of $G$, and hence
  \begin{align*}
    \sigma^*(G)\leq w_G(\tilde{I})=w_G(I_1)+w_G(I_2)&\leq w_{B_1^-}(I_1)+|I_1|+w_{B_2^-}(I_2)+|I_2|\\
    &\leq (|B_1^-|-|I_1|+1)+|I_1|+(|B_2^-|-3)+2=n-1,
  \end{align*}
  which is a contradiction.
\end{proof}

Let $X=\bigcup_{B\in\mathcal{B}}X_B$.

\begin{claim}\label{xadjacent1}
  For every $u\in V(G)$, $N_G(u)\cap X\neq \emptyset$.
  In particular, for every $B\in\mathcal{B}$, if $X_B\neq \emptyset$, then $|X_B|\geq 2$.
\end{claim}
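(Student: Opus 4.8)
The plan is a proof by contradiction. Suppose there is a vertex $u$ with $N_G(u)\cap X=\emptyset$; writing $d=d_G(u)\ (\ge 2)$, this means every vertex of $N_G(u)=\{v_1,\dots,v_d\}$ is a cut-vertex of $G$. For each $i$ I would fix a component $D_i$ of $G-v_i$ not containing $u$, which exists since $v_i$ is a cut-vertex adjacent to $u$. The first step is to record that the sets $D_1,\dots,D_d$ are pairwise disjoint and that none of them meets $\{u,v_1,\dots,v_d\}$: that $u\notin D_i$ is by choice, that $v_j\notin D_i$ for $j\ne i$ is clear since the edge $uv_j$ keeps $v_j$ in $u$'s component of $G-v_i$, and disjointness is the one point that needs an argument — if $w\in D_i\cap D_j$ with $i\ne j$, then any simple $w$–$u$ path meets both $v_i$ and $v_j$, say $v_i$ before $v_j$, and replacing its tail from $v_i$ onward by the edge $v_iu$ produces a $w$–$u$ path avoiding $v_j$, contradicting $w\in D_j$.

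Next I would pick in each branch a low-degree witness. Since $G[D_i\cup\{v_i\}]$ is connected with at least two vertices, it has at least two non-cut-vertices, so there is a non-cut-vertex $y_i$ of $G$ with $y_i\in D_i$. Let $B_i$ be the unique block of $G$ containing $y_i$; since $\delta(G)\ge 2$ forces $B_i\not\simeq K_2$, the block $B_i$ is $2$-connected, from which one checks that $V(B_i)\setminus\{v_i\}\subseteq D_i$. As all edges at $y_i$ lie in $B_i$ we get $d_G(y_i)\le |V(B_i)|-1$. Put $W_i=V(B_i)\setminus\{v_i\}$, so that $y_i\in W_i\subseteq D_i$ and $d_G(y_i)\le |W_i|$.

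Now $I=\{u,y_1,\dots,y_d\}$ is an independent set: for $i\ne j$, $y_j\in D_j$ is disjoint from $D_i\cup\{v_i\}$, so $y_iy_j\notin E(G)$ because $D_i$ is a component of $G-v_i$; and $uy_i\notin E(G)$ since $y_i$ lies outside the component of $G-v_i$ containing $u$. Moreover $|I|=d+1\ge d_G(u)+1\ge\delta_G(I)+1$, so $I$ is large, whence $w_G(I)\ge\sigma^*(G)\ge n$, i.e.\ $\sum_{i=1}^d d_G(y_i)\ge n-d$. On the other hand the $W_i$ are pairwise disjoint and disjoint from $\{u,v_1,\dots,v_d\}$, so $\sum_{i=1}^d d_G(y_i)\le\sum_{i=1}^d|W_i|\le n-(d+1)$, a contradiction. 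This proves the first assertion, and the ``in particular'' part is then immediate: if $X_B=\{x\}$ for some $B\in\mathcal{B}$, then $x$ is a non-cut-vertex whose only block is $B$, so $N_G(x)\subseteq V(B)\setminus\{x\}\subseteq U$, contradicting $N_G(x)\cap X\ne\emptyset$.

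I expect the only genuinely delicate point to be the pairwise disjointness of the branches $D_i$ (equivalently of the sets $W_i$), which is what makes the counting $\sum d_G(y_i)\le n-d-1$ work and thereby contradicts $\sigma^*(G)\ge n$; everything else is bookkeeping with the block–cut-vertex structure already set up, and notably this argument uses only $\delta(G)\ge 2$ and the hypothesis $\sigma^*(G)\ge n$, not Claims \ref{3end}--\ref{xblock}.
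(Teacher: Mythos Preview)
Your argument is correct and follows essentially the same route as the paper: the branches $D_i$ are pairwise disjoint and disjoint from $N_G[u]$, so picking one vertex $y_i$ from each yields a large independent set $I=\{u,y_1,\dots,y_d\}$ whose weight is bounded above by $n-1$, contradicting $\sigma^*(G)\ge n$. The paper's version is slightly leaner: it takes an \emph{arbitrary} $y_i\in D_i$ and uses the immediate bound $d_G(y_i)\le |D_i|$ (since $N_G(y_i)\subseteq D_i\cup\{v_i\}$), which avoids your detour through non-cut-vertices and blocks; your step ``so there is a non-cut-vertex $y_i$ of $G$'' tacitly uses that a non-cut-vertex of $G[D_i\cup\{v_i\}]$ lying in $D_i$ is also a non-cut-vertex of $G$, which is true but not stated.
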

\begin{proof}
  Let $u\in V(G)$.
  Suppose that $N_G(u)\cap X=\emptyset$.
  Write $N_G(u)=\{v_1, v_2,\dots , v_r\}$.
  For each $i\in [r]$, let $D_i$ be a component of $G-v_i$ which does not contain $u$.
  Then
  \begin{align}
    V(D_i)\cap V(D_j)=\emptyset \textrm{\: for\: all\:} i,j\in [r] \textrm{\: with\:} i\neq j\label{eq:disjoint}
  \end{align}
  Let $y_i$ be a vertex of $D_i$ for each $i\in[r]$.
  Then we have $|D_i|\geq d_G(y_i)$.
  Let $I=\{u, y_1, y_2, \dots , y_r\}$.
  The condition (\ref{eq:disjoint}) implies $I$ is a large independent set of $G$, 
  and hence $w_G(I)\geq n$.
  Again, by (\ref{eq:disjoint}), we have
  \begin{align*}
    |G|\geq |N_G[u]|+\sum_{i=1}^r |D_i|\geq (d_G(u)+1)+\sum_{i=1}^r d_G(y_i)=w_G(I)+1\geq n+1,
  \end{align*}
  which is a contradiction.
\end{proof}

Fix an end-block $B_0$ of $G$.
We regard the block-cut-vertex graph $T$ of $G$ as a rooted tree with the root $B_0$.
For each $B\in\mathcal{B}\setminus\{B_0\}$, let $u_B$ be the parent of $B$ in $T$.
By Claim \ref{xblock}, for each $B\in\mathcal{B}\setminus\{B_0\}$ with $X_B\neq\emptyset$, there is a block $A_B$ of $B-u_B$ that contains $X_B$.

\begin{claim}\label{xadjacent2}
  Let $B\in\mathcal{B}\setminus\{B_0\}$ with $X_B\neq\emptyset$.
  If $u\in V(B)\setminus(V(A_B)\cup\{u_B\})$, then $N_G(u)\cap(X\setminus X_B)\neq\emptyset$.
\end{claim}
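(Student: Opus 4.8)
The plan is to argue by contradiction: assume $N_G(u)\cap(X\setminus X_B)=\emptyset$ and deduce $\sigma^*(G)\le n-1$, contradicting the standing hypothesis $\sigma^*(G)\ge n$. First I would locate $u$ in the block--cut-vertex tree $T$. Since $X_B\subseteq V(A_B)$ and $u\notin V(A_B)$, the vertex $u$ is not in $X_B$, so $u$ is a cut-vertex of $G$; as $u\in V(B)$ and $u\ne u_B$, the block $B$ is the parent of $u$ in $T$, and hence $u$ has at least one child block. Combining the contradiction hypothesis with Claim~\ref{xadjacent1} gives $N_G(u)\cap X\subseteq X_B$, so every neighbour of $u$ lying in a child block of $u$ is a cut-vertex of $G$. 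I would then record two structural consequences: no child block of $u$ is an end-block (an end-block has $u$ as its only cut-vertex, yet $u$ must have a neighbour there), and any $2$-connected child block of $u$ contains at least two cut-vertices besides $u$ (it supplies $u$ with at least two neighbours, all cut-vertices). In particular, at least one end-block of $G$ lies strictly below each child block of $u$.

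Next I would build a candidate large, light independent set $I$. On the $B$-side take a non-cut-vertex $x_1\in X_B$ (possible since $X_B\ne\emptyset$); because $x_1$ is a non-cut-vertex of $G$, all its neighbours lie in $V(B)$, so $d_G(x_1)\le|V(B)|-1$. On the other side, for each component $D$ of $G-u$ not containing $u_B$, choose a non-cut-vertex $z_D$ lying in an end-block $E_D$ of $G$ that is a descendant of $u$ (such $z_D$ exists by Claim~\ref{3end}); then $z_D\in X\setminus X_B$, all neighbours of $z_D$ lie in $V(E_D)$, and $d_G(z_D)\le|V(E_D)|-1$. The sets $V(B)$ and the $V(E_D)$ are pairwise disjoint subsets of $V(G)$, and no two of $x_1$ and the $z_D$'s are adjacent, so the set $I$ consisting of $x_1$ and all the $z_D$ is independent with
\[
w_G(I)\le(|V(B)|-1)+\sum_D(|V(E_D)|-1)\le n-1 .
\]
Thus $I$ contradicts $\sigma^*(G)\ge n$ the moment $I$ is \emph{large}, that is, $|I|\ge\delta_G(I)+1$.

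Proving that $I$ (suitably enlarged) is large is the crux, and I expect it to be the main labour of the proof. When $u$ has several child blocks, or some child block of $u$ is $2$-connected (hence branches, so the corresponding $D$ contains two disjoint end-blocks and supplies two independent witnesses), or some $D$ contains a vertex of degree $2$, one can add enough independent deep vertices to $I$ to make it large while keeping the weight bound, since each new witness has its neighbourhood confined to one of a family of pairwise disjoint blocks. The delicate case is $u$ having a unique child block, which is a $K_2$ leading to a long, essentially linear chain of $K_2$-blocks ending in a single end-block; there I would insert an internal degree-$2$ vertex of that chain into $I$, which lowers the largeness requirement to $|I|\ge 3$, and finish by a short case analysis on the first block of the chain that branches or is $2$-connected, in the spirit of the proofs of Claims~\ref{3end} and~\ref{xblock}.
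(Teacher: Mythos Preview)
Your plan has a real gap at the step you yourself flag as the ``crux'': showing that your set $I=\{x_1\}\cup\{z_D\}$ is large. The set $I$ has size $1+k$, where $k$ is the number of components of $G-u$ below $u$ (or, after your enlargements, the number of pairwise disjoint end-blocks you manage to hit). But $\delta_G(I)$ is governed by the degrees of $x_1$ and the $z_D$, which can all be much bigger than $k$. Concretely, $u$ may have a single child block $C$ that is $2$-connected with $X_C=\emptyset$; below the handful of other cut-vertices of $C$ sit a handful of large complete end-blocks, while $x_1$ sits in a moderately large $B$. Then $k$ and the number of available end-block witnesses are bounded by the number of cut-vertices of $C$, while $\delta_G(I)$ is $\min\{d_G(x_1),\min_D d_G(z_D)\}$, which can be far larger. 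Your suggested remedies (branching of a $2$-connected child, degree-$2$ vertices, $K_2$-chains) do not cover this, and the case analysis you sketch does not close.

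The paper sidesteps the largeness difficulty by a different construction you are missing: it puts $u$ itself into the independent set. Writing $N_G(u)=\{v_1,\dots,v_r\}$ with the single $X_B$-neighbour $v_r$ last, it picks for each cut-vertex neighbour $v_i$ ($i\le r-1$) a vertex $y_i$ in a component of $G-v_i$ avoiding $u$, adds a second vertex $x\in X_B\setminus\{v_r\}$, and takes $I=\{u,x,y_1,\dots,y_{r-1}\}$. Now $|I|=r+1=d_G(u)+1\ge\delta_G(I)+1$ automatically. The counting then gives only $n\ge w_G(I)\ge n$, not a strict inequality; so the contradiction is not $\sigma^*(G)<n$ but rather that the resulting equalities force each $V(D_i)\cup\{v_i\}$ to be a clique and $V(G)$ to split as $V(B)$ together with these cliques, yielding a $2$-proper partition $\{V(B),V_1,\dots,V_{r-1}\}$ with $r\le\alpha^*(G)$ (using that $I\setminus\{x\}$ is light), against $G$ being a counterexample. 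The two key ideas you need are therefore: include $u$ in $I$ and branch at each cut-vertex neighbour $v_i$ of $u$ (not at $u$ itself), and aim not for $w_G(I)\le n-1$ but for a tight inequality whose equality case produces the forbidden $2$-proper partition.
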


\begin{proof}
  Let $B$ be a block in $\mathcal{B}\setminus\{B_0\}$ and let $u\in V(B)\setminus(V(A_B)\cup\{u_B\})$.
  Write $N_G(u)=\{v_1, v_2, \dots, v_r\}$.
  Suppose that $N_G(u)\cap(X\setminus X_B)=\emptyset$.
  By Claim \ref{xadjacent1}, we have $N_G(u)\cap X_B\neq\emptyset$.
  Since $u\notin V(A_B)$ and $X_B\subseteq V(A_B)$, we have $|N_G(u)\cap X_B|=1$.
  Without loss of generality, we may assume that $v_r\in X_B$ and $v_i\notin X$ for all $i\in[r-1]$.
  By Claim \ref{xadjacent1}, there is a vertex $x\in X_B\setminus\{v_r\}$.
  Since $u\notin V(A_B)$, we have 
  \begin{align}
    |N_G(u)\cap N_G(x)|\leq |\{u_B, v_r\}|=2.\label{eq:overlap}
  \end{align}
  Fix $i\in[r-1]$.
  Let $D_i$ be a component of $G-v_i$ which does not contain $u$.
  Since $B$ is a block of $G$, $B-v_i$ is a connected graph containing $u$ if $v_i\in V(B)$.
  This leads to $V(D_i)\cap V(B)=\emptyset$.
  Let $y_i\in V(D_i)$ be a vertex.
  Then we have
  \begin{align}
    |D_i|\geq |N_G[y_i]\setminus\{v_i\}|\geq d_G(y_i).\label{eq:compsize}
  \end{align}
  Let $I=\{u, x, y_1, y_2, \dots , y_{r-1}\}$.
  Then $I$ is a large independent set of $G$, and hence $w_G(I)\geq n$.
  Recall that $V(D_i)\cap V(B)=\emptyset$ for all $i\in[r-1]$.
  Hence by (\ref{eq:overlap}) and (\ref{eq:compsize}), we have
  \begin{align*}
    n=|G|\geq |N_G[u]\cup N_G[x]|+\sum_{i=1}^{r-1}|D_i|
    &\geq |N_G[u]|+|N_G[x]|-2+\sum_{i=1}^{r-1}d_G(y_i)\\
    &=d_G(u)+d_G(x)+\sum_{i=1}^{r-1}d_G(y_i)\\
    &=w_G(I)\geq n.
  \end{align*}
  This together with (\ref{eq:compsize}) forces $y_iv_i\in E(G)$ and $V(D_i)=N_G[y_i]\setminus\{v_i\}$ for each $i\in[r-1]$.
  Since $y_i\in V(D_i)$ is arbitrary, $V(D_i)\cup\{v_i\}$ is a clique of $G$.
  Furthermore, $V(G)$ is the disjoint union of $V(B)$, $N_G(u)\setminus V(B)$ and $\bigcup_{i=1}^{r-1}V(D_i)$.
  For each $i\in[r-1]$, we define $V_i$ as $V_i=V(D_i)\cup\{v_i\}$ if $v_i\notin V(B)$,
  and $V_i=V(D_i)$ otherwise.
  By Claim \ref{3end}, $|V_i|\geq 3$ for each $i\in[r-1]$.
  In addition, since $w_G(I\setminus\{x\})<w_G(I)=n$, 
  $I\setminus\{x\}$ is a light independent set of $G$, and hence $\alpha^*(G)\geq |I\setminus\{x\}|=r$.
  Combining these, we obtain a 2-proper partition $\mathcal{P}:=\{V(B), V_2, V_3,\dots , V_r\}$ of $G$
  with $|\mathcal{P}|=r\leq\alpha^*(G)$, a contradiction.
\end{proof}

\begin{claim}\label{asize}
  For every $B\in\mathcal{B}\setminus\{B_0\}$ with $X_B\neq\emptyset$, $A_B$ has at least 3 vertices.
  In particular, $A_B$ is 2-connected.
\end{claim}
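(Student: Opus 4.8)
The plan is to argue by contradiction, so suppose $|V(A_B)|\le 2$. Since $A_B$ is a block of $B-u_B$ it has at least two vertices, and since $X_B\subseteq V(A_B)$ while $|X_B|\ge 2$ by Claim~\ref{xadjacent1}, we must have $V(A_B)=X_B=\{x_1,x_2\}$ with $x_1x_2\in E(G)$. As $x_1,x_2\notin U$, every neighbour of $x_1$ or $x_2$ lies in $V(B)$, i.e. $N_G(x_1),N_G(x_2)\subseteq V(B)$, and every vertex of $V(B)\setminus\{x_1,x_2\}$ is a cut-vertex of $G$. If $|V(B)|=3$, then $B\simeq K_3$ and $V(B)\cap U=\{u_B\}$, so $B$ is an end-block of order $3$, contradicting Claim~\ref{3end}. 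Hence $|V(B)|\ge 4$; then $B-u_B$ is connected on at least three vertices and $x_1x_2$ is a bridge of it, so let $C_1\ni x_1$ and $C_2\ni x_2$ be the two components of $(B-u_B)-x_1x_2$.

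Next I would record a lower bound on $|V(B)|$. The only edge of $B-u_B$ joining $C_1$ and $C_2$ is $x_1x_2$, so $N_G(x_1)\subseteq\{u_B,x_2\}\cup(V(C_1)\setminus\{x_1\})$ and, symmetrically, $N_G(x_2)\subseteq\{u_B,x_1\}\cup(V(C_2)\setminus\{x_2\})$; hence $N_G[x_1]\cap N_G[x_2]\subseteq\{x_1,x_2,u_B\}$. Since $N_G[x_1]\cup N_G[x_2]\subseteq V(B)$, this yields $|V(B)|\ge d_G(x_1)+d_G(x_2)-1$. Assume $d_G(x_1)\le d_G(x_2)$. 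Then $x_1$ cannot be adjacent to every vertex of $V(B)\setminus\{x_1\}$: otherwise $d_G(x_1)=|V(B)|-1$, and also $d_G(x_2)=|V(B)|-1$ (as $N_G(x_2)\subseteq V(B)$ and $d_G(x_2)\ge d_G(x_1)$), which would force $|V(B)|\le 3$. So there is a cut-vertex $v^*\in V(B)\setminus\{x_1,x_2\}$ with $v^*\notin N_G(x_1)$.

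The core of the argument is then the ``hide one vertex behind each separator'' construction already used in the proofs of Claims~\ref{xadjacent1} and~\ref{xadjacent2}. Write $N_G(x_1)\setminus\{x_2\}=\{v_1,\dots,v_r\}$ (these are cut-vertices of $G$, with $r=d_G(x_1)-1$) and set $v_{r+1}=v^*$. For each $i\in[r+1]$, let $D_i$ be a component of $G-v_i$ not containing $x_1$; since $v_i\in V(B)\setminus\{x_1\}$ and $B$ is $2$-connected, $V(B)\setminus\{v_i\}$ lies in the component of $G-v_i$ containing $x_1$, so $D_i\cap V(B)=\emptyset$, and one checks as in those proofs that the $D_i$ are pairwise disjoint and avoid $x_1$. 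Picking $y_i\in D_i$, the set $I=\{x_1,y_1,\dots,y_{r+1}\}$ is independent with $|I|=d_G(x_1)+1$, and as $x_1\in I$ we have $|I|\ge\delta_G(I)+1$, so $I$ is large. Using $d_G(y_i)\le|D_i|$ and $\sum_i|D_i|\le n-|V(B)|$, we obtain $n\le\sigma^*(G)\le w_G(I)\le d_G(x_1)+(n-|V(B)|)$, i.e. $|V(B)|\le d_G(x_1)$, which contradicts $|V(B)|\ge d_G(x_1)+d_G(x_2)-1\ge d_G(x_1)+1$. Therefore $|V(A_B)|\ge 3$, and a block on at least three vertices is $2$-connected.

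I expect the main obstacle to be the bookkeeping in the last paragraph rather than any new idea: one has to be sure each $D_i$ genuinely avoids $V(B)$ (this is where $2$-connectivity of $B$ is used), that the $D_i$ are pairwise disjoint and miss $x_1$ so that $I$ is really independent and $\sum_i d_G(y_i)\le n-|V(B)|$ holds, and that the extra separator $v^*$ is available --- which is exactly the non-universality step of the second paragraph, the only place the bound $|V(B)|\ge 4$ (hence Claim~\ref{3end}) actually enters.
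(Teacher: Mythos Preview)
Your argument is correct and follows essentially the same ``hide one vertex behind each separator'' idea as the paper's proof. The only difference is cosmetic: the paper first disposes of the end-block case directly via Claim~\ref{3end}, and in the remaining case picks the extra separator as a vertex $v_1\in V(B)\setminus(X_B\cup\{u_B\})$ adjacent to one of $x_1,x_2$ (such a vertex exists since $B$ is not an end-block), then runs the count with $x_2$ as focal vertex to get $n\ge |N_G[x_2]|+\sum_i|D_i|\ge w_G(I)+1\ge n+1$ directly; you instead first establish the bound $|V(B)|\ge d_G(x_1)+d_G(x_2)-1$, use it to produce a non-neighbour $v^*$ of $x_1$, and run the count with $x_1$ as focal vertex. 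Both reach the same contradiction.
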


\begin{proof}
  If $B\in\mathcal{B}\setminus\{B_0\}$ is an end-block of $G$, then $|A_B|=|B-u_B|\geq 3$ by Claims \ref{3end} and \ref{xblock}.
  Suppose that $B\in\mathcal{B}\setminus\{B_0\}$ is not an end-block, $X_B\neq\emptyset$ and $|A_B|\leq 2$.
  By Claim \ref{xadjacent1}, we know that $V(A_B)=X_B$ and $|A_B|=2$.
  Write $X_B=\{x_1, x_2\}$. 
  Since $A_B$ is a connected graph, we have $x_1x_2\in E(G)$.
  Since $B$ is not an end-block of $G$, one of $x_1$ and $x_2$, say $x_1$, is adjacent to a vertex $v_1\in V(B)\setminus(X_B\cup\{u_B\})$.
  Write $N_G(x_2)=\{x_1, v_2, v_3, \dots , v_r\}$ where $r=d_G(x_2)$.
  Since $v_1\notin V(A_B)$, $v_1$ is not adjacent to $x_2$.
  For each $i\in[r]$, let $D_i$ be a component of $G-v_i$ which does not contain $x_2$,
  and let $y_i\in V(D_i)$.
  Then we have $|D_i|\geq d_G(y_i)$.
  Since $V(D_i)\cap V(B)=\emptyset$ for every $i\in[r]$ and $V(D_i)\cap V(D_j)=\emptyset$ for all distinct $i,j\in [r]$, 
  $I=\{x_2, y_1, y_2,\dots , y_r\}$ is a large independent set of $G$.
  Hence 
  \begin{align*}
    |G|\geq |N_G[x_2]|+\sum_{i=1}^{r}|D_i|\geq (d_G(x_2)+1)+\sum_{i=1}^{r}d_G(y_i)=w_G(I)+1\geq n+1,
  \end{align*} 
  which is a contradiction.
\end{proof}

For each $B\in\mathcal{B}$, let $G(B)$ be a subgraph of $G$ induced by the vertices of $B$ and its descendant blocks with respect to $T$.
In addition, for each $B\in\mathcal{B}$, let $\mathcal{B}(B)=\{B'\in\mathcal{B}\mid B'\subseteq G(B)\}$
and let $\tilde{\mathcal{B}}(B)=\{B'\in\mathcal{B}(B)\mid X_{B'}\neq\emptyset\}$.

\begin{claim}\label{2pp}
  Let $B\in\mathcal{B}\setminus\{B_0\}$.
  Then $G(B)-u_B$ has a 2-proper partition $\mathcal{P}_B^-$ with $|\mathcal{P}_B^-|=|\tilde{\mathcal{B}}(B)|$. 
  Furthermore, if $X_B\neq\emptyset$, then $G(B)$ has a 2-proper partition $\mathcal{P}_B$ with $|\mathcal{P}_B|=|\tilde{\mathcal{B}}(B)|$.
\end{claim}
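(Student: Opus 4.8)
The plan is to prove both assertions simultaneously by induction on $|\mathcal{B}(B)|$, constructing the partitions from the leaves of $T$ upward. In the base case $|\mathcal{B}(B)|=1$, the block $B$ is an end-block with $B\neq B_0$, so $G(B)=B$, $X_B=V(B)\setminus\{u_B\}\neq\emptyset$, $\tilde{\mathcal{B}}(B)=\{B\}$, and $|B|\geq 4$ by Claim \ref{3end}; hence $B$ is $2$-connected and $\mathcal{P}_B=\{V(B)\}$ works. By Claim \ref{xblock} with $u=u_B$, the set $X_B$ lies in a single block $A_B$ of $B-u_B$, and since $X_B=V(B-u_B)$ this forces $B-u_B=A_B$, which is $2$-connected by Claim \ref{asize}; so $\mathcal{P}_B^-=\{V(B)\setminus\{u_B\}\}$ works.

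For the inductive step I would take $|\mathcal{B}(B)|\geq 2$, let $c_1,\dots,c_m$ (with $m\geq 1$) be the children of $B$ in $T$ --- these are cut-vertices in $V(B)\setminus\{u_B\}$ --- and, for each $j$, let $D_{j,1},\dots,D_{j,k_j}$ be the children of $c_j$ in $T$, so that $u_{D_{j,l}}=c_j$, $D_{j,l}\neq B_0$, and $|\mathcal{B}(D_{j,l})|<|\mathcal{B}(B)|$. The induction hypothesis then supplies, for every $(j,l)$, a $2$-proper partition $\mathcal{P}_{D_{j,l}}^-$ of $G(D_{j,l})-c_j$ with $|\tilde{\mathcal{B}}(D_{j,l})|$ parts, and, whenever $X_{D_{j,l}}\neq\emptyset$, a $2$-proper partition $\mathcal{P}_{D_{j,l}}$ of $G(D_{j,l})$ with $|\tilde{\mathcal{B}}(D_{j,l})|$ parts (one of whose parts, necessarily $2$-connected, contains $c_j$). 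Since $\mathcal{B}(B)$ is the disjoint union of $\{B\}$ and the sets $\mathcal{B}(D_{j,l})$, one has $|\tilde{\mathcal{B}}(B)|=\varepsilon+\sum_{j,l}|\tilde{\mathcal{B}}(D_{j,l})|$, where $\varepsilon=1$ if $X_B\neq\emptyset$ and $\varepsilon=0$ otherwise.

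Building $\mathcal{P}_B$ (the case $X_B\neq\emptyset$) is easy: a non-cut-vertex of $B$ has all its neighbours in $B$ and degree at least $2$, so $|B|\geq 3$ and $B$ is $2$-connected, whence $\mathcal{P}_B:=\{V(B)\}\cup\bigcup_{j,l}\mathcal{P}_{D_{j,l}}^-$ has $1+\sum_{j,l}|\tilde{\mathcal{B}}(D_{j,l})|=|\tilde{\mathcal{B}}(B)|$ parts because $V(B)$ together with the sets $V(G(D_{j,l}))\setminus\{c_j\}$ partitions $V(G(B))$. For $\mathcal{P}_B^-$ I would call $c_j$ \emph{absorbed} when $X_B\neq\emptyset$ and $c_j\in V(A_B)$; for an absorbed $c_j$ I include $\mathcal{P}_{D_{j,l}}^-$ for all its children $l$. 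For every non-absorbed $c_j$ I first claim there is a child block $D_{j,l_j}$ of $c_j$ with $X_{D_{j,l_j}}\neq\emptyset$: a non-cut-vertex neighbour of $c_j$ lying outside $X_B$ exists by Claim \ref{xadjacent2} when $X_B\neq\emptyset$ (since then $c_j\in V(B)\setminus(V(A_B)\cup\{u_B\})$) and by Claim \ref{xadjacent1} when $X_B=\emptyset$, and such a neighbour, being a non-cut-vertex adjacent to $c_j$, must lie in $X_{D_{j,l}}$ for some child block $D_{j,l}$ of $c_j$. For a non-absorbed $c_j$ I then include $\mathcal{P}_{D_{j,l_j}}$ together with $\mathcal{P}_{D_{j,l}}^-$ for $l\neq l_j$. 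Finally, if $X_B\neq\emptyset$ I add one more part, $V(A_B)$, which is $2$-connected by Claim \ref{asize} and contains $X_B$. Each $c_j$, absorbed or not, contributes $\sum_l|\tilde{\mathcal{B}}(D_{j,l})|$ parts, so $\mathcal{P}_B^-$ has $\varepsilon+\sum_{j,l}|\tilde{\mathcal{B}}(D_{j,l})|=|\tilde{\mathcal{B}}(B)|$ parts.

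What then remains is routine verification: that the listed vertex sets really do partition $V(G(B))$, respectively $V(G(B))\setminus\{u_B\}$ --- the key point being that distinct $G(D_{j,l})$ meet only in the relevant $c_j$, which is covered exactly once because we use $\mathcal{P}_{D_{j,l_j}}$ for one child of $c_j$ and the $c_j$-free partitions $\mathcal{P}_{D_{j,l}}^-$ for the others, while $u_B$ belongs to none of the sets once the $V(B)$-part is dropped --- and that each part induces a $2$-connected subgraph of $G$, which follows from the induction hypothesis together with the facts that $G(B)$ is (by definition) an induced subgraph of $G$ and that blocks, hence $A_B$, are induced subgraphs. The only genuine content is the existence of the child block $D_{j,l_j}$ for each non-absorbed $c_j$; the split between the cases $X_B=\emptyset$ and $X_B\neq\emptyset$ there, and checking that the chosen neighbour indeed lands in a child block of $c_j$, is the step I expect to require the most care.
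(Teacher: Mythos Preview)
Your proposal is correct and follows essentially the same approach as the paper's proof: the paper inducts on the height of the rooted block-cut tree of $G(B)$ (which is equivalent to your induction on $|\mathcal{B}(B)|$), and constructs $\mathcal{P}_B$ and $\mathcal{P}_B^-$ in exactly the way you describe, using Claims~\ref{xadjacent1} and~\ref{xadjacent2} to locate, for each cut-vertex of $B$ outside $V(A_B)\cup\{u_B\}$, a child block with nonempty $X$. Your ``absorbed'' terminology neatly unifies the paper's two separate cases ($X_B=\emptyset$ versus $X_B\neq\emptyset$), but the underlying construction and the counting are identical.
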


\begin{proof}
  The proof goes by induction on the height $h$ of the block-cut-vertex graph of $G(B)$ with the root $B$. 
  When $h=0$, $B$ is an end-block of $G$, so $G(B)=B$, $X_B=V(B)\setminus\{u_B\}\neq\emptyset$ and $|\tilde{\mathcal{B}}(B)|=1$.
  By Claim \ref{xblock}, $\{X_{B}\}$ is a desired 2-proper partition of $G(B)-u_B$, 
  and $\{V(B)\}$ is a desired 2-proper partition of $G(B)$.
  Hence we assume $h\geq 1$, i.e., $B$ has a child in $T$.
  For each $x\in V(B)\setminus(X_B\cup\{u_B\})$, let $\mathcal{C}(x)$ be the family of blocks in $\mathcal{B}\setminus\{B\}$ containing $x$,
  and let $\mathcal{C}(B)=\bigcup_{x\in V(B)\setminus(X_B\cup\{u_B\})}\mathcal{C}(x)$.

  By the induction hypothesis, the statements of the claim hold for every $B'\in\mathcal{C}(B)$.
  For each block $B'\in \mathcal{C}(B)$, let $\mathcal{P}^-_{B'}$ be a 2-proper partition of $G(B')-u_{B'}$ with $|\mathcal{P}^-_{B'}|=|\tilde{\mathcal{B}}(B')|$.
  In addition, for each block $B'\in\mathcal{C}(B)$ with $X_{B'}\neq\emptyset$, 
  let $\mathcal{P}_{B'}$ be a 2-proper partition of $G(B')$ with $|\mathcal{P}_{B'}|=|\tilde{\mathcal{B}}(B')|$.

  First we assume $X_B=\emptyset$.
  For each $x\in V(B)\setminus\{u_B\}$, there is a block $B_x\in\mathcal{C}(x)$ with $X_{B_x}\neq \emptyset$ by Claim \ref{xadjacent1}. 
  Then, 
  \begin{align*}
    \mathcal{P}_B^-=\bigcup_{x\in V(B)\setminus\{u_B\}}\left(\mathcal{P}_{B_x}\cup\bigcup_{B'\in\mathcal{C}(x)\setminus\{B_x\}}\mathcal{P}^-_{B'}\right)
  \end{align*}
  is a 2-proper partition of $G(B)-u_B$ with 
  $|\mathcal{P}_B^-|=\sum_{B'\in\mathcal{C}(B)}|\tilde{\mathcal{B}}(B')|=|\tilde{\mathcal{B}}(B)|$.

  Next we assume that $X_B\neq \emptyset$. 
  Then $B$ is 2-connected, and hence
  \begin{align*}
    \mathcal{P}_B=\{V(B)\}\cup\left(\bigcup_{B'\in\mathcal{C}(B)}\mathcal{P}^-_{B'}\right)
  \end{align*}
  is a 2-proper partition of $G(B)$ with 
  $|\mathcal{P}_B|=1+\sum_{B'\in\mathcal{C}(B)}|\tilde{\mathcal{B}}(B')|=|\tilde{\mathcal{B}}(B)|$,
  which proves the second statement of the claim.
  For each $x\in V(B)\setminus (V(A_B)\cup\{u_B\})$, there is a block $B_x\in\mathcal{C}(x)$ with $X_{B_x}\neq \emptyset$ by Claim \ref{xadjacent2}. 
  By Claim \ref{asize}, $A_B$ is 2-connected, and hence
  \begin{align*}
    \mathcal{P}_B^-=\{V(A_B)\}\cup\left(\bigcup_{x\in V(B)\setminus(V(A_B)\cup\{u_B\})}\mathcal{P}_{B_x}\right)\cup
    \left(\bigcup_{B'\in\mathcal{C}(B)\setminus\{B_x\mid x\in V(B)\setminus(V(A_B)\cup\{u_B\})\}}\mathcal{P}^-_{B'}\right)
  \end{align*}
  is a 2-proper partition of $G(B)-u_B$ with 
  $|\mathcal{P}^-_B|=1+\sum_{B'\in\mathcal{C}(B)}|\tilde{\mathcal{B}}(B')|=|\tilde{\mathcal{B}}(B)|$, 
  which proves the first statement of the claim.
\end{proof}

\begin{claim}\label{fbsize}
  For every $B\in\mathcal{B}$, $|\tilde{\mathcal{B}}(B)|\leq \alpha^*(G(B))$.
\end{claim}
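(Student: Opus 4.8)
The plan is to produce an explicit light independent set of $G(B)$ of size $|\tilde{\mathcal{B}}(B)|$. For each $B'\in\tilde{\mathcal{B}}(B)$ we have $X_{B'}\neq\emptyset$, so I would pick a vertex $x_{B'}\in X_{B'}$ and set $I=\{x_{B'}\mid B'\in\tilde{\mathcal{B}}(B)\}$. The first point to record is that every vertex of $X_{B'}=V(B')\setminus U$ is a non-cut-vertex of $G$, hence lies in exactly one block, namely $B'$. Consequently the chosen vertices are pairwise distinct, so $|I|=|\tilde{\mathcal{B}}(B)|$; moreover, if $x_{B'}x_{B''}\in E(G)$ for distinct $B',B''\in\tilde{\mathcal{B}}(B)$, then this edge would lie in a block common to $x_{B'}$ and $x_{B''}$, which is impossible. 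Thus $I$ is an independent set, and $I\subseteq V(G(B))$ since each $B'\in\tilde{\mathcal{B}}(B)\subseteq\mathcal{B}(B)$ is a subgraph of $G(B)$.

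The second step is to bound $w_{G(B)}(I)$. Because $x_{B'}$ belongs only to the block $B'$, all of its neighbours in $G$ lie in $B'$, so $d_{G(B)}(x_{B'})\le d_G(x_{B'})\le |B'|-1$. Hence $w_{G(B)}(I)\le\sum_{B'\in\tilde{\mathcal{B}}(B)}(|B'|-1)\le\sum_{B'\in\mathcal{B}(B)}(|B'|-1)$. The last ingredient is the standard identity that for any connected graph $H$ the block sizes satisfy $\sum_{B'\in\mathcal{B}(H)}(|B'|-1)=|H|-1$, proved immediately by induction on the number of blocks by peeling off an end-block. Applying this with $H=G(B)$ — whose set of blocks is exactly $\mathcal{B}(B)$ and whose block-cut-vertex tree is the subtree of $T$ rooted at $B$ — gives $\sum_{B'\in\mathcal{B}(B)}(|B'|-1)=|G(B)|-1$. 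Combining the inequalities yields $w_{G(B)}(I)\le |G(B)|-1$, so $I$ is a light independent set of $G(B)$, and therefore $\alpha^*(G(B))\ge |I|=|\tilde{\mathcal{B}}(B)|$.

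I expect no serious obstacle; the only point requiring care is the degree bound $d_{G(B)}(x_{B'})\le |B'|-1$, which works precisely because the representative $x_{B'}$ is taken inside $X_{B'}$ rather than at a cut-vertex (a cut-vertex could have degree spread over several blocks, breaking both the independence of $I$ and the count $|I|=|\tilde{\mathcal{B}}(B)|$). An alternative but heavier route would induct on the height of the block-cut-vertex tree of $G(B)$, assembling light independent sets of the $G(B')$ over the children $B'$ of $B$ together with one vertex of $X_B$ when $X_B\neq\emptyset$; this also works but forces one to track that degrees do not increase when passing from $G(B')$ to $G(B)$, so the direct argument above is cleaner.
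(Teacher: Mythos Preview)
Your proof is correct and follows essentially the same approach as the paper: choose one vertex from each $X_{B'}$ with $B'\in\tilde{\mathcal{B}}(B)$, observe that these form an independent set, and bound the total degree by $\sum_{B'\in\mathcal{B}(B)}(|B'|-1)=|G(B)|-1$. You have in fact been more careful than the paper in spelling out why the chosen vertices are distinct and pairwise non-adjacent (the paper simply invokes ``the definition of $X_{B'}$'' for this), and in justifying the block-sum identity explicitly.
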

\begin{proof}
  Let $B\in\mathcal{B}$. 
  For each $B'\in\tilde{\mathcal{B}}(B)$, take a vertex $x_{B'}\in X_{B'}$.
  Let $I=\{x_{B'}\mid B'\in\tilde{\mathcal{B}}(B)\}$.
  Then
  \begin{align*}
    w_{G(B)}(I)=\sum_{B'\in\tilde{\mathcal{B}}(B)}d_G(x_{B'})
    \leq \sum_{B'\in\tilde{\mathcal{B}}(B)}(|B'|-1)
    \leq \sum_{B'\in\mathcal{B}(B)}(|B'|-1)=|G(B)|-1.
  \end{align*}
  This together with the definition of $X_{B'}$ implies that $I$ is a light independent set of $G(B)$.
  Consequently, $\alpha^*(G(B))\geq |I|=|\tilde{\mathcal{B}}(B)|$.
\end{proof}

Recall that $B_0$ is an end-block of $G$.
Let $u_0$ be the unique cut-vertex of $G$ belonging to $B_0$, and let $\mathcal{B}_0$ be the family of blocks in $\mathcal{B}\setminus\{B_0\}$ of $G$ containing $u_0$.
For each $B\in\mathcal{B}_0$, let $\mathcal{P}_{B}^-$ be as in Claim \ref{2pp}.
Then $\mathcal{P}_0:=\{V(B_0)\}\cup(\bigcup_{B\in\mathcal{B}_0}\mathcal{P}_B^-)$ is a 2-proper partition of $G$.
Since $B_0$ is an end-block of $G$, $X_{B_0}\neq \emptyset$.
This together with Claim \ref{fbsize} leads to $|\mathcal{P}_0|=1+\sum_{B\in\mathcal{B}_0}|\tilde{\mathcal{B}}(B)|=|\tilde{\mathcal{B}}(B_0)|\leq \alpha^*(G(B_0))=\alpha^*(G)$,
which is a contradiction.
This completes the proof of Theorem \ref{ind}.

\section{Sharpness of Theorem \ref{ind}}

In this section, we construct graphs to show the sharpness of Theorem \ref{ind}.
For positive integers $n$ and $d$ with $d(d+1)+1<n$, 
let $\bm{t}=(t_1,t_2,\dots ,t_d)$ be a tuple of integers such that 
$\sum_{i=1}^{d}t_i=n-1$ and $\min\{t_i\mid i\in[d]\}\geq d+1$.
For each $i\in[d]$, let $H_i$ be the complete graph of order $t_i$, 
and let $v_i$ be a vertex of $H_i$.
Take a new vertex $u$.
Now we define two graphs $G_{\bm{t}}$ and $G_{\bm{t}}'$ as follows.
A graph $G_{\bm{t}}$ is obtained from the disjoint union of $H_1, H_2,\dots ,H_d$ 
by adding $u$ and edges $uv_i$ for all $i\in [d]$.
A graph $G_{\bm{t}}'$ is obtained from the disjoint union of $H_1, H_2,\dots ,H_d$
by adding $u$ and all edges between $u$ and $\bigcup_{i=1}^{d}V(H_i)$.
By constructions, both $G_{\bm{t}}$ and $G_{\bm{t}}'$ have order $n$.

We show that $G_{\bm{t}}$ satisfies $\sigma^*(G_{\bm{t}})=n-1$ and has no 2-proper partition.
Let $I$ be any large independent set of $G_{\bm{t}}$.
As $\delta(G_{\bm{t}})=d$, $I$ must have at least $d+1$ vertices, and thus 
$I=\{u,w_1,w_2,\dots ,w_d\}$ where $w_i\in V(H_i)\setminus\{v_i\}$ for each $i\in [d]$.
Thus we have $w_{G_{\bm{t}}}(I)=d_{G_{\bm{t}}}(u)+\sum_{i=1}^{d}d_{G_{\bm{t}}}(w_i)=d+\sum_{i=1}^{d}(t_i-1)=n-1$.
Since $I$ is arbitrary, it follows that $\sigma^*(G_{\bm{t}})=n-1$.
Since no 2-connected subgraph of $G_{\bm{t}}$ contains $u$, $G_{\bm{t}}$ does not have a 2-proper partition.
Hence $G_{\bm{t}}$ shows the sharpness of the lower bound of $\sigma^*(G)$ in Theorem \ref{ind}.

Next, we show that $\sigma^*(G_{\bm{t}}')=+\infty$, $\alpha^*(G_{\bm{t}}')\leq d$ and 
every 2-proper partition of $G_{\bm{t}}'$ has at least $d$ parts.
Since $\delta(G_{\bm{t}}')\geq d$ and $\alpha(G_{\bm{t}}')=d$, 
we have $\sigma^*(G_{\bm{t}}')=+\infty$.
We also have $\alpha^*(G_{\bm{t}}')\leq \alpha(G_{\bm{t}}')=d$.
Let $\mathcal{P}$ be a 2-proper partition of $G_{\bm{t}}'$.
For all distinct $i,j\in[d]$, $V(H_i)$ and $V(H_j)$ 
must be in distinct parts in $\mathcal{P}$.
Thus $\mathcal{P}$ has at least $d$ parts.
Hence $G_{\bm{t}}'$ shows the sharpness of the upper bound of $|\mathcal{P}|$ in Theorem \ref{ind}.

\section*{Acknowledgement}
This work was partially suported by JSPS KAKENHI Grant number JP23K03204 (to M.F),
JSPS KAKENHI Grant number JP22K03404 (to K.O),
Keio University SPRING scholarship Grant number JPMJSP2123 (to M.K), 
and JST ERATO Grant Number JPMJER2301 (to M.K).

\end{document}